\numberwithin{equation}{section}
\newtheorem{theorem}{Theorem}[section]
\newtheorem{corollary}[theorem]{Corollary}
\newtheorem{conjecture}[theorem]{Conjecture}
\begin{document}

\makeatletter
\def\imod#1{\allowbreak\mkern10mu({\operator@font mod}\,\,#1)}
\makeatother

\author[George E. Andrews]{George E. Andrews}
\address{[G.E.A.] The Pennsylvania State University, Department of Mathematics, University Park, PA 16802}
\email{gea1@psu.edu}

\author[Ali K. Uncu]{Ali K. Uncu}
\address{[A.K.U.] Johann Radon Institute for Computational and Applied Mathematics, Austrian Academy of Science, Altenbergerstraße 69, A-4040 Linz, Austria}
\email{akuncu@ricam.oeaw.ac.at}
\address{[A.K.U.] University of Bath, Faculty of Science, Department of Computer Science, Bath, BA2\,7AY, UK}
\email{aku21@bath.ac.uk}

\title[\scalebox{.9}{Sequences in Overpartitions}]{Sequences in Overpartitions}
     
\begin{abstract} This paper is devoted to the study of sequences in overpartitions and their relation to 2-color partitions. An extensive study of a general class of double series is required to achieve these ends.
\end{abstract}
   
\thanks{Research of the first author is partially supported by the Simons foundation grant number 633284. Research of the second author is partly supported by EPSRC grant number EP/T015713/1 and partly by FWF grant P-34501N}   
   
\keywords{Overpartitions, Partition Identities, Determinants}
  
\subjclass[2010]{Primary 11B65; Secondary 11C08, 11C20, 11P81, 11P84, 05A10, 05A15, 05A17}

\date{\today}
   
   
\maketitle

\section{Introduction}

The topic of sequences in partitions goes back to the work of Sylvester \cite{Sylvester} and MacMahon \cite{MacMahon}. In 2004, Holroyd, Liggett and Romik found the concept central to a problem in bootstrap percolation \cite{HLR}. Subsequently there have been a number of papers on this topic by Andrews \cite{A1, A2} and Bringmann et. al. \cite{BMN}. The topic was also considered in the unpublished Ph.D. thesis of Hirschhorn \cite{HirschhornThesis} and by many others.

This paper has its genesis in two identities discovered empirically by the second author. 

\begin{theorem}\label{thm:main_theorem}
\begin{equation}\label{eq:1mod3eqn} \sum_{m,n\geq 0} \frac{(-1)^n q^{\frac{3n(3n+1)}{2} + m^2+ 3mn}}{(q;q)_m(q^3;q^3)_n} = \frac{1}{(q;q^3)_\infty}.\end{equation}
\end{theorem}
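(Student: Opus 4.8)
The plan is to deduce Theorem~\ref{thm:main_theorem} from a one-parameter refinement whose value is forced by a $q$-difference equation, rather than by trying to evaluate either sum in closed form. A first, orienting observation explains why no shortcut is available. Writing $Q=q^3$ and summing over $m$ last, the inner series is $\sum_{n\ge 0}(-1)^n Q^{n(3n+1)/2+mn}/(Q;Q)_n$, and since $n(3n+1)/2=3\binom{n}{2}+2n$ the pentagonal exponent lives over the base $Q^3=q^9$ while the $q$-factorial $(Q;Q)_n$ lives over $Q=q^3$. Because these bases disagree, Euler's theorem $(z;Q)_\infty=\sum_n(-1)^nQ^{\binom n2}z^n/(Q;Q)_n$ does \emph{not} collapse the inner sum, and symmetrically the $m$-sum $\sum_m q^{m^2}(q^{3n})^m/(q;q)_m$ is a Rogers--Ramanujan-type series with no product form for generic $n$. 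Thus the double series is irreducible and must be handled as a genuine two-fold object.

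Second, I would introduce a free variable $z$ to turn the identity into a functional equation. Set
\[ F(z)=\sum_{m,n\ge 0}\frac{(-1)^n q^{3n(3n+1)/2+m^2+3mn}\,z^{w(m,n)}}{(q;q)_m\,(q^3;q^3)_n}, \]
with the integer weight $w(m,n)$ chosen (from the combinatorics of the underlying sequences in overpartitions) so that $w(0,0)=0$ with $w>0$ otherwise, $F(1)$ is the left-hand side, and the target becomes the Euler expansion $\tfrac1{(zq;q^3)_\infty}=\sum_{k\ge 0}z^kq^k/(q^3;q^3)_k$. Since $\tfrac1{(zq;q^3)_\infty}$ itself satisfies $P(z)=\tfrac{1}{1-zq}P(zq^3)$, the goal is to prove the matching relation $F(z)=\tfrac{1}{1-zq}\,F(zq^3)$. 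The mechanism is the standard one: peel off the lowest terms using the factors $1-q^m$ and $1-q^{3n}$, reindex $m\mapsto m+1$ and $n\mapsto n+1$, and collect. Iterating the resulting relation together with $F(0)=1$ then yields the infinite product $\tfrac1{(zq;q^3)_\infty}$, and specializing $z=1$ gives the theorem.

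The crux --- and the reason the paper develops an ``extensive study of a general class of double series'' --- is the cross term $q^{3mn}$: under a shift of $z$ the two summation indices do not separate, so a naive single recurrence in $z$ does not close, and one is forced to track the joint $(m,n)$ dependence simultaneously. I expect this to be the main obstacle, and the place where the general double-series framework (plausibly via the determinant evaluations flagged in the keywords) does the real work, by resolving the coupled shift into the single $q^3$-dilation that matches the product $(zq;q^3)_\infty$. As an independent check one can compare low-order coefficients in $q$ of both sides, and one can cross-validate the final functional equation against a Bailey-pair reading of the factor $q^{m^2}/(q;q)_m$.
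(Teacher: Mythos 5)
Your strategy has a fatal gap: the refinement on which everything rests does not exist, for any weight satisfying your own requirements. Write $T(m,n)$ for the $(m,n)$-th term of the double series in \eqref{eq:1mod3eqn}. Your functional equation $F(z)=\frac{1}{1-zq}F(zq^3)$, together with $F(0)=1$, forces by iteration (legitimate, since your $F$ is a formal power series in $z$) the coefficient-wise identity $\sum_{w(m,n)=k}T(m,n)=q^k/(q^3;q^3)_k$ for every $k\geq 0$. Take $k=1$. The target $q/(1-q^3)=q+q^4+q^7+\cdots$ begins with $q$, and the only term of the double series whose $q$-expansion begins at $q$ is $T(1,0)=q/(1-q)$; every other term has leading exponent $\tfrac{3n(3n+1)}{2}+m^2+3mn\in\{4,6,9,10,16,\dots\}$. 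So $(1,0)$ must lie in the weight-$1$ class, and the remaining members of that class would have to sum to $q/(1-q^3)-q/(1-q)=-q^2(1+q)/(1-q^3)=-q^2-q^3-q^5-q^6-\cdots$, a series beginning with $-q^2$. No sub-collection of the remaining terms can produce this: each has leading exponent at least $4$, and cancellation can only raise, never lower, a leading exponent. The same computation shows what goes wrong with the natural choice $w(m,n)=m+3n$ (which is exactly the paper's variable $x^{m+3n}$): the coefficient of $x$ in $F(0,1;x)$ is $q/(1-q)$, not $q/(1-q^3)$, so the refined identity $F(0,1;x)=1/(xq;q^3)_\infty$ is simply false, and \eqref{eq:1mod3eqn} holds only at the point $x=1$. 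What the double series actually satisfies is the $N\to\infty$ limit of Theorem~\ref{thm:F_N_ijkx_matrix_rec}, namely the four-term relation $F(x)=F(xq)+xqF(xq^2)-x^3q^6F(xq^3)$, which does not telescope to a product. Your closing admission that the cross term $q^{3mn}$ prevents the recurrence from closing is precisely this obstruction, and deferring its resolution to an unspecified ``double-series framework'' leaves the central step of the proof missing --- indeed provably unfixable in the form you propose.

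For contrast, the paper circumvents this by refining in a different direction: it truncates the series to polynomials $F_N(0,1,1;1)$ built from $q$-binomial coefficients, derives the three-term recurrence of Corollary~\ref{cor:F_N011x_rec}, and then proves via the $q$-Zeilberger algorithm and holonomic closure properties (Theorem~\ref{thm:f_N+1}) that $F_N(0,1,1;1)+q^NF_{N-1}(0,1,1;1)$ equals the non-obvious companion polynomial $f_{N+1}(q)=\sum_{j\geq 0}q^{3j^2-2j}{N+1\brack 3j}_q(q^2;q^3)_j$; only after letting $N\to\infty$ does a summable single series appear, which the $q$-Gauss sum evaluates to $1/(q;q^3)_\infty$. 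None of these ingredients --- the truncation in $N$, the intermediate series $f_N$, the computer-assisted recurrence matching, the $q$-Gauss evaluation --- has a counterpart in your sketch, and the fact that the companion Conjecture~\ref{conj:main_conjecture} is still open is further evidence that no soft functional-equation argument of the kind you envision is available here.
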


The second identity is still unproven.

\begin{conjecture}\label{conj:main_conjecture}
\begin{equation}\label{eq:2_3mod6eqn} \sum_{m,n\geq 0} \frac{(-1)^n q^{\frac{3n(3n+1)}{2} + m^2+ 3mn +m + n}}{(q;q)_m(q^3;q^3)_n} = \frac{1}{(q^2,q^3;q^6)_\infty},\end{equation}
 where \[(A;q)_n := \prod_{j=0}^{n-1} (1-Aq^j),\text{  and  }(A_1,A_2,\dots,A_r;q) := \prod_{i=1}^r(A_i;q)_n.\]
\end{conjecture}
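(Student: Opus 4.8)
The plan is to embed both identities in a single one-parameter family and to prove the conjecture as the value of that family at $a=q$, exactly as Theorem~\ref{thm:main_theorem} is its value at $a=1$. Introduce
\[
\Phi(a):=\sum_{m,n\ge 0}\frac{(-1)^n a^{m+n}\,q^{\frac{3n(3n+1)}{2}+m^2+3mn}}{(q;q)_m\,(q^3;q^3)_n},
\]
so that $\Phi(1)$ is the left side of \eqref{eq:1mod3eqn} and, since replacing $a$ by $q$ inserts precisely the extra exponent $m+n$, $\Phi(q)$ is the left side of \eqref{eq:2_3mod6eqn}. Thus it suffices to compute $\Phi(q)$, and Theorem~\ref{thm:main_theorem} already supplies the anchor value $\Phi(1)=1/(q;q^3)_\infty$.

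First I would carry out the inner sum on $m$. Writing $G(z):=\sum_{m\ge 0} z^m q^{m^2}/(q;q)_m$, one has the classical Rogers--Ramanujan functional equation $G(z)=G(zq)+zq\,G(zq^2)$, and the inner sum is exactly $G(aq^{3n})$, so
\[
\Phi(a)=\sum_{n\ge 0}\frac{(-1)^n a^n q^{\frac{3n(3n+1)}{2}}}{(q^3;q^3)_n}\,G(aq^{3n}).
\]
The next step is to turn this into a functional equation for $\Phi$ in the variable $a$. Shifting $n\mapsto n+1$ in the outer sum produces a factor that I would reconcile with the $q$-shifts coming from the Rogers--Ramanujan relation applied to $G(aq^{3n})$; the target is a linear $q$-difference equation expressing $\Phi(a)$ through $\Phi(aq)$ and $\Phi(aq^2)$, or after iteration through $\Phi(aq^3)$, with explicit polynomial coefficients. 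This is the general double series alluded to in the abstract, and its recurrence is what the determinant machinery is designed to solve.

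Having the recurrence, I would solve it by iterating into a continued fraction and representing $\Phi(a)$ as a ratio of tridiagonal (Hankel-type) determinants, then specialize at $a=q$ and show the resulting infinite product collapses to $1/(q^2,q^3;q^6)_\infty$, using $\Phi(1)=1/(q;q^3)_\infty$ as a consistency check on the coefficients. The main obstacle is precisely the derivation and solution of this $q$-difference equation: the summand mixes two bases, $q$ for the index $m$ and $q^3$ for the index $n$, and the coupling $q^{3mn}$ prevents the $q$-shifts from aligning, so the equation will not factor as cleanly as in the single-variable Rogers--Ramanujan case. I expect the specialization $a=q$ to land at a delicate point of the recurrence where the determinant must be evaluated by hand, and it is plausibly this misalignment that has so far blocked the proof. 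Should the $q$-difference equation resist solution, an alternative route is to read the double sum as a cubic change-of-base insertion of a Rogers--Ramanujan Bailey pair and to apply a change-of-base Bailey lemma, choosing the seed that produces the $(q^2,q^3;q^6)_\infty$ product.
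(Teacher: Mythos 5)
There is no proof of this statement to compare against: the paper itself states that identity \eqref{eq:2_3mod6eqn} ``is still unproven,'' and your proposal does not close it either --- it is a research plan whose decisive steps are exactly the ones left undone. What you actually establish is correct but elementary: the interpolation $\Phi(a)$ (so that $\Phi(q^i)$ is the paper's $F(i,1;1)$ from \eqref{eq:F_ikx_def}) and the reduction of the inner sum to the Rogers--Ramanujan function, $\Phi(a)=\sum_{n\ge0}(-1)^n a^n q^{3n(3n+1)/2}\,G(aq^{3n})/(q^3;q^3)_n$. Everything after that is conditional (``the target is\dots'', ``I would solve\dots'', ``I expect\dots''). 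Worse, the first unproven step is likely false as stated: the $q$-difference equation in $a$ does not close under your parametrization. Shifting $a\mapsto aq$ produces the factor $1-q^{m+n}$ in the summand; writing $1-q^{m+n}=(1-q^m)+q^m(1-q^n)$, the piece $(1-q^m)$ is absorbed by $(q;q)_m$, but the complementary piece leaves $(1-q^n)/(q^3;q^3)_n = 1/\bigl((q^3;q^3)_{n-1}(1+q^n+q^{2n})\bigr)$, and the factor $1+q^n+q^{2n}$ in the denominator does not telescope. This is precisely the base-mixing obstruction you acknowledge, and acknowledging it is not overcoming it. The functional equation that \emph{does} exist is the paper's \eqref{eq:f_i0j_rec_for_overpartitions} (the $N\to\infty$ limit of Theorem~\ref{thm:F_N_ijkx_matrix_rec}), where the shifts are in $x$ (tracking $m+(2k+1)n$, not $m+n$) and $q^i$ enters only as a parameter; for $k=1$ it is a four-term relation in $x,xq,xq^2,xq^3$, not the second-order equation you posit, and solving it at $x=1$, $i=1$ \emph{is} the open problem.

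The second gap is strategic. Your plan --- recurrence, continued fraction, tridiagonal determinant --- is machinery the paper already builds in Sections~\ref{sec:FN_recs} and \ref{sec:ContFrac}, and it did not suffice: even for the proven case \eqref{eq:1mod3eqn}, the infinite product was not obtained by solving the functional equation or evaluating a determinant, but by a polynomial finitization ($f_N(q)$ in Theorem~\ref{thm:f_N+1}), a $q$-Zeilberger-certified recurrence matching it to $F_N(0,1,1;1)+q^NF_{N-1}(0,1,1;1)$, and finally the $q$-Gauss summation. Your proposal contains no analogue of that finitization for $i=1$, and no candidate summation theorem that could produce the modulus-$6$ product $1/(q^2,q^3;q^6)_\infty$ from a series whose natural modulus is $3$; ``the resulting infinite product collapses'' is asserted, not argued, and using $\Phi(1)=1/(q;q^3)_\infty$ as a ``consistency check'' verifies nothing about the value at $a=q$. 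The change-of-base Bailey-pair fallback is likewise only named --- no seed, no specific lemma, no computation --- so it cannot be credited as a proof route either.
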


In light of the fact that the double series in \eqref{eq:1mod3eqn} and \eqref{eq:2_3mod6eqn} are clearly cousins of many of the generating functions alluded to in the first paragraph, it is not surprising that these series are special cases of more general series related to the theory of partitions.

In this paper, we shall study the following series and their applications to overpartitions:
\begin{equation}\label{eq:F_ikx_def}
F(i,k;x) = \sum_{m,n\geq 0} \frac{(-1)^n q^{{(2k+1)n+1\choose 2} + m^2 + (2k+1)mn + i(m+n)} x^{m+(2k+1)n}}{(q;q)_m (q^{2k+1};q^{2k+1})_n}.
\end{equation}

We note that $F(0,1;1)$ is the series in Theorem~\ref{thm:main_theorem} and $F(1,1;1)$ is the series in the Conjecture~\ref{conj:main_conjecture}.

Indeed, our focus will, for the most part, be on the polynomial refinements of \eqref{eq:1mod3eqn} and \eqref{eq:2_3mod6eqn}. Namely, for $k,j,N\geq 0$, 
\begin{align*}
F_N(i,j,k;x,q)&=F_N(i,j,k;x)=F_N(i,j,k) \\ \nonumber
&=\left\lbrace\begin{array}{ll}
\begin{array}{l}\displaystyle\sum_{m,n\geq 0} (-1)^n q^{{(2k+1)n+1\choose 2} + m^2 + (2k+1)mn + i(m+n)} x^{m+(2k+1)n} \\ \displaystyle \hspace{3cm} \times{N-(2k+1)n-m+j\brack m}_q {N-2kn -m \brack n}_{q^{2k+1}}\end{array},& \text{if  }N\geq 0, \\ \\
0, & \text{if  }N<0,\\
\end{array} \right.
\end{align*}
where
\[{A\brack B}_q := \left\lbrace \begin{array}{ll}0 & \text{if  } B>A\text{ or }B<0, \\
\dfrac{(q;q)_{A}}{(q;q)_B(q;q)_{A-B}},&\text{otherwise.} \end{array} \right. \]

Note $\lim_{N\rightarrow\infty} F_N(0,j,1;1)$ is the series in the Theorem~\ref{thm:main_theorem} and $\lim_{N\rightarrow\infty} F_N(1,j,1;1)$ is the series in the Conjecture~\ref{conj:main_conjecture}.

As we will show in Section~\label{sec:Overpartitions} \[\lim_{N\rightarrow\infty} \frac{F_N(i,0,k;1)}{(q;q)_\infty}\]
is the generating function for a general class of overpartitions. 

It should be noted that Theorem~\ref{thm:main_theorem} has a direct consequence.

\begin{corollary}\label{cor:Overpartition_corollary} The number of overpartitions of $N$ that do not contain instances of $\overline{j}+(j-1)+\overline{(j-2)}$ and $\overline{j}+\overline{(j-1)}$ equals the number of partitions of $N$ into red and green parts with each green part $\equiv 1 $ (mod $3$).
\end{corollary}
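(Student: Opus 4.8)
The plan is to interpret both sides of Corollary~\ref{cor:Overpartition_corollary} as coefficients of a single power series in $q$ and then to derive the stated equality directly from Theorem~\ref{thm:main_theorem}. First I would identify the right-hand side: partitions of $N$ into red and green parts where green parts are $\equiv 1 \imod{3}$ have the two-variable generating function $1/\bigl((q;q)_\infty (q;q^3)_\infty\bigr)$, the factor $1/(q;q)_\infty$ counting the red parts (unrestricted) and $1/(q;q^3)_\infty$ counting the green parts. By Theorem~\ref{thm:main_theorem} the factor $1/(q;q^3)_\infty$ equals $F(0,1;1)=\lim_{N\to\infty}F_N(0,j,1;1)$, but for the counting argument the clean product form is what I want.

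Next I would build the overpartition generating function on the left. An overpartition of $N$ is a partition in which the final occurrence of each distinct part size may be overlined; equivalently it is a pair of partitions, one into ordinary parts and one into distinct overlined parts. The forbidden configurations are (i) $\overline{j}+(j-1)+\overline{(j-2)}$ and (ii) $\overline{j}+\overline{(j-1)}$, both phrased in the language of sequences of consecutive part sizes with prescribed overlining. The key step is to translate these two local forbidden patterns into a product (or an explicitly summable $q$-series) for the generating function of the admissible overpartitions. I expect this to be exactly the combinatorial content encoded by $\lim_{N\to\infty} F_N(i,0,k;1)/(q;q)_\infty$ promised in the overpartition section of the paper, specialized to $k=1$ and the relevant value of $i$; so the plan is to invoke that section's identification of $\lim_{N\to\infty} F_N(i,0,1;1)/(q;q)_\infty$ as the generating function for overpartitions avoiding precisely these two patterns.

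With both generating functions in hand, the proof reduces to the analytic identity
\[
\frac{\lim_{N\to\infty} F_N(i,0,1;1)}{(q;q)_\infty} = \frac{1}{(q;q)_\infty\,(q;q^3)_\infty},
\]
i.e.\ to showing $\lim_{N\to\infty} F_N(i,0,1;1)=1/(q;q^3)_\infty$ for the appropriate $i$. Since $\lim_{N\to\infty} F_N(0,j,1;1)$ is the series in Theorem~\ref{thm:main_theorem}, the case $i=0$ follows immediately provided the limit is independent of the $j$-parameter in the limit $N\to\infty$; comparing the defining sum shows that as $N\to\infty$ each $q$-binomial tends to $1/(q;q)_m$ and $1/(q^3;q^3)_n$ respectively, erasing the $j$-dependence, so $\lim_{N\to\infty}F_N(0,j,1;1)=F(0,1;1)=1/(q;q^3)_\infty$ by Theorem~\ref{thm:main_theorem}. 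Cancelling the common factor $1/(q;q)_\infty$ then yields the coefficient-wise equality claimed in the corollary.

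The main obstacle will be the combinatorial translation in the second paragraph: making precise the overpartition-avoidance conditions $\overline{j}+(j-1)+\overline{(j-2)}$ and $\overline{j}+\overline{(j-1)}$ and verifying rigorously that their generating function is exactly $\lim_{N\to\infty}F_N(i,0,1;1)/(q;q)_\infty$. This requires a careful sign-reversing or bijective reading of the finite polynomials $F_N(i,0,1;1)$, since the alternating sign $(-1)^n$ and the two $q$-binomial factors must be shown to encode exactly the inclusion--exclusion over the two forbidden local patterns. Once that structural result from Section~\ref{sec:Overpartitions} is established, the corollary is an immediate specialization combined with Theorem~\ref{thm:main_theorem}, and no further hard analysis is needed.
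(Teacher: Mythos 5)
Your proposal follows essentially the same route as the paper's own proof: divide the evaluation $F(0,1;1)=1/(q;q^3)_\infty$ from Theorem~\ref{thm:main_theorem} by $(q;q)_\infty$, invoke the structural result of Section~\ref{sec:Overpartitions} (Theorem~\ref{thm:Overpartition_GF} with $i=0$, $k=1$) to identify the left-hand side as the generating function for the pattern-avoiding overpartitions, and identify $1/\bigl((q;q)_\infty(q;q^3)_\infty\bigr)$ as the generating function for the two-colored partitions. This is exactly the paper's argument, so your proposal is correct and essentially identical in approach.
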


For example, when $N=4$, the 13 overpartitions in the first class are 
\begin{center}$4,\ \overline{4},\ 3+1,\ \overline{3}+1,\ 3+\overline{1},\ \overline{3}+\overline{1},\ 2+2,\ \overline{2}+2,\ 2+1+1,\ $\\$ \overline{2}+1+1,\ 2+\overline{1}+1,\ 1+1+1+1,\ \overline{1}+1+1+1,$\end{center}
and the 13 colored partitions in the second class are 
\begin{center}$ 4_r,\  4_g,\  3_r+1_r,\  3_r+1_g,\ 2_r+2_r,\  2_r+1_r+1_r,\ 1_r+1_g+1_r,\ $\\$ 2_r+1_g+1_g ,\  1_r+1_r+1_r+1_r,\ 1_g+1_r+1_r+1_r,\ $\\$1_g+1_g+1_r+1_r,\ 1_g+1_g+1_g+1_r,\ 1_g+1_g+1_g+1_g.$ \end{center}

Section~\ref{sec:FN_recs} will be devoted to determining the recurrences and $q$-difference equations satisfied by $F_N(i,j,k;x)$ and Section~\ref{sec:F_k0_reduction} will consider identities arising when $k=0$.

Section~\ref{sec:Main_thm} will provide the proof of Theorem~\ref{thm:main_theorem}. Section~\ref{sec:Overpartitions} will consider overpartition applications. Section~\ref{sec:ContFrac} is devoted to some continued fraction expansions related to Theorem~\ref{thm:main_theorem} and their implications.

\section{Recurrence and $q$-Difference Equations for $F_N(i,k;x)$}\label{sec:FN_recs}

In the following, we shall use the standard notations,

\begin{align*}
(A;q)_n&:=\prod_{j=0}^{n-1}(1-Aq^i),\hspace{.5cm}(A;q)_\infty :=\prod_{j=0}^{\infty}(1-Aq^i),\\
\intertext{and}
{A \brack B}_q &:= \left\lbrace \begin{array}{ll}0 & \text{if  } B>A\text{ or }B<0, \\
\dfrac{(q;q)_{A}}{(q;q)_B(q;q)_{A-B}},&\text{otherwise.} \end{array} \right.
\end{align*}

\begin{theorem}\label{thm:F_Nijkx_rec}
\begin{equation}
\label{eq:F_Nijkx_rec} F_N(i,j,k;x) = F_{N-1}(i,j,k;x) + x q^{N+j+k-1}  F_{N-2}(i,j,k;x) - x^{2k+1} q^{(2k+1)(N-k)+i}  F_{N-(2k+1)}(i,j,k;x).
\end{equation}
\end{theorem}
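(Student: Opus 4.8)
The plan is to establish the recurrence at the level of the summand, so that no analytic input is needed: both sides are polynomials in $x$ and $q$, and $F_M(i,j,k;x)=0$ for $M<0$, so a termwise verification suffices. I would first rewrite \eqref{eq:F_Nijkx_rec} as
\[ F_N(i,j,k;x) - F_{N-1}(i,j,k;x) = x q^{N+j+k-1} F_{N-2}(i,j,k;x) - x^{2k+1} q^{(2k+1)(N-k)+i} F_{N-(2k+1)}(i,j,k;x), \]
and work on the left-hand side. Abbreviate by $P={N-(2k+1)n-m+j\brack m}_q$ and $Q={N-2kn-m\brack n}_{q^{2k+1}}$ the two Gaussian binomials in the summand of $F_N$, and by $P'$, $Q'$ the corresponding factors in the summand of $F_{N-1}$ (obtained by replacing $N$ with $N-1$). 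Writing $e(m,n):={(2k+1)n+1\choose 2}+m^2+(2k+1)mn+i(m+n)$ for the running exponent, I note that $e(m,n)$, the sign $(-1)^n$, and the power $x^{m+(2k+1)n}$ are all independent of $N$; hence the entire $N$-dependence of a summand sits in $P$ and $Q$, and the difference of summands factors as $PQ-P'Q'=(P-P')\,Q'+P\,(Q-Q')$.

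The engine is the single Gaussian-binomial identity ${A\brack B}_q-{A-1\brack B}_q=q^{A-B}{A-1\brack B-1}_q$, applied in base $q$ to $P$ and in base $q^{2k+1}$ to $Q$:
\[ P-P' = q^{N-(2k+1)n-2m+j}{N-1-(2k+1)n-m+j\brack m-1}_q, \qquad Q-Q' = q^{(2k+1)(N-(2k+1)n-m)}{N-1-2kn-m\brack n-1}_{q^{2k+1}}. \]
The decisive structural choice is the particular grouping written above, pairing $(P-P')$ with $Q'$ and $P$ with $(Q-Q')$. With this grouping, after the reindexing $m\mapsto m+1$ in the first group the two binomials of $(P-P')Q'$ become \emph{simultaneously} the two binomials of the summand of $F_{N-2}$, while after $n\mapsto n+1$ in the second group the two binomials of $P(Q-Q')$ become simultaneously those of $F_{N-(2k+1)}$. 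The opposite pairing would leave the two binomial factors sitting at different values of $N$, and the identity would fail to close; securing this alignment is the one genuinely delicate structural point of the argument.

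What then remains is the bookkeeping of the prefactors, and this is where I expect the main, purely computational, obstacle to lie. The power of $x$ is automatic, since $m\mapsto m+1$ pulls out one factor $x$ and $n\mapsto n+1$ pulls out $x^{2k+1}$, while $(-1)^{n+1}=-(-1)^n$ supplies the minus sign in the last term. The substance is verifying that the $q$-powers telescope correctly: in the first group one must check that $e(m+1,n)$ together with the Pascal exponent $N-(2k+1)n-2(m+1)+j$ collapses to $e(m,n)$ plus the linear prefactor of $F_{N-2}$ in the statement, and in the second group that $e(m,n+1)$ together with $(2k+1)\big(N-(2k+1)(n+1)-m\big)$ collapses to $e(m,n)+(2k+1)(N-k)+i$. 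Both reductions hinge on the increments of the quadratic part $m^2+(2k+1)mn$ and of ${(2k+1)n+1\choose 2}$, which are exactly what absorb the Pascal exponents. Once these two elementary (if slightly tedious) exponent identities are confirmed, the two reindexed groups are recognized as precisely $x q^{N+j+k-1}F_{N-2}$ and $-x^{2k+1}q^{(2k+1)(N-k)+i}F_{N-(2k+1)}$, which completes the proof.
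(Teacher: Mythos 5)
Your argument is, in structure, exactly the paper's: the paper also puts all four terms over the common prefactor $(-1)^nq^{e(m,n)}x^{m+(2k+1)n}$ (in your notation), applies the same $q$-Pascal identity ${A\brack B}_q-{A-1\brack B}_q=q^{A-B}{A-1\brack B-1}_q$ once in base $q$ and once in base $q^{2k+1}$, and uses precisely your pairing --- the base-$q$ Pascal step is matched with the $F_{N-2}$ term and the base-$q^{2k+1}$ step with the $F_{N-(2k+1)}$ term. Whether one verifies that the four-term combination vanishes (as the paper does) or derives the right-hand side from $F_N-F_{N-1}$ (as you do) is only a difference of presentation.

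The defect sits at the one step you declared ``purely computational'' and then asserted rather than performed. Carrying it out: since $e(m+1,n)-e(m,n)=2m+1+(2k+1)n+i$, one finds
\begin{equation*}
e(m+1,n)+\bigl(N-(2k+1)n-2(m+1)+j\bigr)=e(m,n)+N+i+j-1,
\end{equation*}
so your first group is $xq^{N+i+j-1}F_{N-2}(i,j,k;x)$, \emph{not} $xq^{N+j+k-1}F_{N-2}(i,j,k;x)$: the exponent contains $i$, not $k$, and the two agree only when $i=k$. In fact the theorem as printed is false in general: with $(i,j,k)=(0,1,1)$ and $N=3$, the initial values listed in the paper give $F_2+xq^{4}F_1-x^3q^{6}F_0=1+(q+q^2+q^4)x+q^5x^2-q^6x^3$, whereas $F_3=1+(q+q^2+q^3)x+q^4x^2-q^6x^3$. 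The exponent $N+j+k-1$ in \eqref{eq:F_Nijkx_rec} is a misprint for $N+i+j-1$; this is confirmed both by the paper's own proof, whose braces carry the factor $q^{N-(2k+1)n-2m+j}$ (which is exactly what $xq^{N+i+j-1}F_{N-2}$ contributes), and by the proof of Corollary~\ref{cor:F_N011x_rec}, where the recurrence is invoked with coefficient $xq^{N}=xq^{N+i+j-1}$ rather than $xq^{N+1}$. Your second exponent identity, for the $F_{N-(2k+1)}$ group, is correct as stated. So your method is sound and does prove the (corrected) recurrence; but as written, your proof certifies that the computation lands on the printed coefficient, which it does not. Had you executed the check you yourself identified as the crux, you would have caught the misprint instead of endorsing it.
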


\begin{proof}
\begin{align}\label{eq:F_Nijkx_rec_openly_written_in_proof}
&F_N(i,j,k;x) - F_{N-1}(i,j,k;x) - x q^{N+j+k-1}  F_{N-2}(i,j,k;x) + x^{2k+1} q^{(2k+1)(N-k)+i}  F_{N-(2k+1)}(i,j,k;x)\\
\nonumber&\hspace{.5cm}= \sum_{m,n\geq 0} (-1)^n q^{{(2k+1)n+1\choose 2} + m^2 + (2k+1)mn + i(m+n)} x^{m+(2k+1)n} \times \\ 
\nonumber&\left\lbrace {N-(2k+1)n-m+j\brack m}_q {N-2kn-m\brack}_{q^{2k+1}}\right.-{N-1-(2k+1)n-m+j\brack m}_q{N-1-2kn-m\brack n}_{q^{2k+1}}\\ 
\nonumber&\hspace{2cm}-q^{N-(2k+1)n-2m+j} {N-1-(2k+1)n-m+j\brack m-1}_q {N-2kn -m-1\brack n}_{q^{2k+1}}\\ 
\nonumber&\hspace{2cm} \left. +q^{(2k+1)(N-(2k+1)n-m)} {N-(2k+1)n-m+j\brack m}_q {N-2kn -m-1\brack n-1}_{q^{2k+1}} \right\rbrace
\end{align}
Let us combine the first and the last term in the braces in \eqref{eq:F_Nijkx_rec_openly_written_in_proof}. This yields
\begin{align}\label{eq:F_Nijkx_rec_first_last_term_combination}
{N-(2k+1)n-m+j \brack m}_q &\left\lbrace {N-2kn-m \brack n}_{q^{2k+1}} - q^{(2k+1))(N-(2k+1)n-m)}{N-2kn-m-1\brack n-1}_{q^{2k+1}} \right\rbrace\\
&\nonumber={N-(2k+1)n-m+j \brack m}_q{N-2kn-m-1\brack n}_{q^{2k+1}}
\end{align} by \cite[(3,3,3), p.35]{Theory_of_Partitions}.

Next we combine the second and the third terms in the braces of \eqref{eq:F_Nijkx_rec_openly_written_in_proof}. This yields
\begin{align}\label{eq:F_Nijkx_rec_decond_third_term_combination}
-{N-2kn-m-1 \brack n}_{q^{2k+1}} &\left\lbrace {N-(2k+1)n-m+j \brack m}_q - q^{N-(2k+1)n-2m+j}{N-(2k+1)n-m+j-1\brack m-1}_q \right\rbrace\\
&\nonumber=-{N-(2k+1)n-m+j\brack m}_q{N-2kn-m-1 \brack n}_{q^{2k+1}}
\end{align} by \cite[(3,3,3), p.35]{Theory_of_Partitions}.

Thus we see that combining the second and the third terms \eqref{eq:F_Nijkx_rec_decond_third_term_combination} is the negative of the term we get by combining the first and the last terms \eqref{eq:F_Nijkx_rec_first_last_term_combination} of \eqref{eq:F_Nijkx_rec_openly_written_in_proof}. So the expression inside the braces of \eqref{eq:F_Nijkx_rec_openly_written_in_proof} vanishes and the theorem is proven.
\end{proof}

\begin{corollary}\label{cor:F_N011x_rec} For $N\geq 1$,
\begin{equation}
\label{eq:F_N011x_rec} F_N(0,1,1;x) = (1+xq^N) F_{N-1}(0,1,1;x) -x^2q^{2N-1} F_{N-2} (0,1,1;x).
\end{equation}
\end{corollary}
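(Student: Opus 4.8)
The plan is to derive the two-term recurrence from the three-term recurrence of Theorem~\ref{thm:F_Nijkx_rec} by a telescoping argument on the ``defect'' of the claimed identity, rather than by manipulating $q$-binomial coefficients directly. First I would set $f_N := F_N(0,1,1;x)$ and specialize Theorem~\ref{thm:F_Nijkx_rec} at $i=0$, $j=1$, $k=1$ (so that $2k+1=3$), which produces a three-term recurrence of the shape
\[ f_N = f_{N-1} + x q^{N} f_{N-2} - x^{3} q^{3N-3} f_{N-3} \qquad (N\geq 2), \]
together with the conventions $f_N = 0$ for $N<0$ and the initial values $f_0 = 1$ and $f_1 = 1+xq$ read off directly from the defining sum.

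Next I would introduce the defect of the target identity,
\[ E_N := f_N - (1+xq^N)f_{N-1} + x^2 q^{2N-1} f_{N-2}, \]
so that proving the corollary amounts to showing $E_N = 0$ for all $N\geq 1$. Substituting the specialized three-term recurrence for $f_N$ into $E_N$ and collecting the surviving terms, I expect the coefficient of $f_{N-2}$ to combine as $xq^N + x^2q^{2N-1} = xq^N(1+xq^{N-1})$, allowing the whole expression to factor as
\[ E_N = -x q^N\bigl(f_{N-1} - (1+xq^{N-1})f_{N-2} + x^2 q^{2N-3} f_{N-3}\bigr), \]
where the bracketed quantity is exactly $E_{N-1}$. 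This gives the first-order recurrence $E_N = -xq^N E_{N-1}$, reducing the entire claim to a single base case.

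Finally I would verify that base case directly: since $f_{-1}=0$, we have $E_1 = f_1 - (1+xq)f_0 = (1+xq)-(1+xq)=0$, and then $E_N = -xq^N E_{N-1}=0$ propagates by induction for every $N\geq 1$, which is \eqref{eq:F_N011x_rec}.

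I anticipate the main obstacle to be the bookkeeping in the middle step: one must check that the $q$-powers produced by the substitution align precisely so that the leftover bracket equals $E_{N-1}$ itself, and not merely something proportional to it with lower-order corrections. This alignment is what the special values $i=0$, $j=1$, $k=1$ make possible; it depends on the $f_{N-3}$ exponent $3N-3$ being the sum of the shift $N$ and the exponent $2(N-1)-1$ appearing in $E_{N-1}$, and on the two $f_{N-2}$ contributions pooling into the single factor $xq^N(1+xq^{N-1})$. A secondary point requiring care is the behaviour at small $N$, where the convention $f_N=0$ for $N<0$ must be reconciled with the range of validity of the specialized recurrence so that the induction is correctly anchored at $N=1$.
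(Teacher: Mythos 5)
Your argument is correct and is essentially the paper's own proof: there, too, one forms the defect $S(N)=F_N(0,1,1;x)-(1+xq^N)F_{N-1}(0,1,1;x)+x^2q^{2N-1}F_{N-2}(0,1,1;x)$, uses the specialized three-term recurrence of Theorem~\ref{thm:F_Nijkx_rec} to obtain exactly your relation $S(N)+xq^N S(N-1)=0$, and then propagates vanishing initial values by induction. The only differences are in bookkeeping: the paper anchors the induction by checking $S(N)=0$ directly for $1\le N\le 3$ and invokes Theorem~\ref{thm:F_Nijkx_rec} only for $N>3$, thereby sidestepping the small-$N$ validity question that you flag (your use of the recurrence at $N=2,3$ with $f_{-1}=0$ does in fact hold); also, your coefficient $xq^{N}$ on $f_{N-2}$ matches what the paper itself uses in this proof --- the exponent $N+j+k-1$ printed in \eqref{eq:F_Nijkx_rec}, which would give $q^{N+1}$ here, appears to be a typo for $N+i+j-1$, so your specialization is the correct one.
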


\begin{proof}
First we not the initial conditions 
\begin{align*}
F_{-1}(0,1,1;x)&=0,\\
F_0(0,1,1;x)&=1,\\
F_1(0,1,1;x)&=1+xq,\\
F_2(0,1,1;x)&=1+(q + q^2)x,\intertext{and}
F_3(0,1,1;x)&=1+(q+q^2+q^3)x+x^2 q^4-x^3 q^6.\\
\end{align*}
Thus we check directly that \eqref{eq:F_N011x_rec} is true for $1\leq N\leq 3$. 

If $N>3$, we know from Theorem~\ref{thm:F_Nijkx_rec} that \begin{align*}
0&= F_N(0,1,1;x)-F_{N-1}(0,1,1;x) -xq^N F_{N-2}(0,1,1;x) + x^3 q^{3N-3} F_{N-3}(0,1,1;x)\\
&=(F_N(0,1,1;x) - (1+xq^N) F_{N-1}(0,1,1;x) +x^2q^{2N-1} F_{N-2} (0,1,1;x))\\
&\hspace{1cm}+ xq^N (F_{N-1}(0,1,1;x) - (1+xq^{N-1}) F_{N-2}(0,1,1;x) +x^2q^{2N-3} F_{N-3} (0,1,1;x))
\end{align*}
Let \[S(N) = F_N(0,1,1;x)- (1+xq^N) F_{N-1}(0,1,1;x) +x^2q^{2N-1} F_{N-2} (0,1,1;x),\] then we have just shown that \begin{equation}\label{eq:S_N_rec}S(N) + xq^N S(N-1)=0.\end{equation} But we began with the observation that for $1\leq N\leq 3$, $S(N)=0$. Hence, \eqref{eq:S_N_rec} and these initial conditions prove that $S(N)\equiv 0$ for all $N\geq 1$.
\end{proof}

\begin{theorem}\label{thm:F_Nijkx_three_term_j_rec}
\begin{equation}
\label{eq:F_Nijkx_three_term_j_rec} F_N(i,j,k;x) = F_N(i,j-1,k;x) + x q^{N+i+j-1} F_{N-1} (i,j-1,k;x).
\end{equation}
\end{theorem}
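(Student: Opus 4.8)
The plan is to prove the recurrence by a direct manipulation of the defining double sum for $F_N(i,j,k;x)$, exactly paralleling the strategy used in the proof of Theorem~\ref{thm:F_Nijkx_rec}. Here the only index being incremented is $j$, and notably $j$ appears solely inside the first $q$-binomial coefficient ${N-(2k+1)n-m+j\brack m}_q$; the second $q$-binomial ${N-2kn-m\brack n}_{q^{2k+1}}$ is independent of $j$. First I would write out the three sums $F_N(i,j,k;x)$, $F_N(i,j-1,k;x)$, and $xq^{N+i+j-1}F_{N-1}(i,j-1,k;x)$ term by term over the same index set $m,n\geq 0$, noting that they all carry identical $(-1)^n$, powers of $q$ (apart from the explicit prefactor $xq^{N+i+j-1}$ attached to the third sum), powers of $x$, and the common second $q^{2k+1}$-binomial factor. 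This reduces the identity to a purely $q$-binomial statement inside the braces, summand by summand.

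The key step is then the following single-variable $q$-Pascal recurrence. Factoring out the common ${N-2kn-m\brack n}_{q^{2k+1}}$, one must verify that
\begin{equation*}
{N-(2k+1)n-m+j\brack m}_q = {N-(2k+1)n-m+j-1\brack m}_q + q^{\,N-(2k+1)n-2m+i+j-1}\,x^{?}\cdots,
\end{equation*}
so I would instead argue cleanly as follows. In the third sum, reindex so that its $q^{N+i+j-1}$ prefactor combines with the shift $N\mapsto N-1$; this turns the first binomial of the third sum into ${N-1-(2k+1)n-m+j\brack m}_q = {N-(2k+1)n-m+(j-1)\brack m}_q$ multiplied by an appropriate power of $q$ coming from the $m^2$, $(2k+1)mn$, and $i(m+n)$ exponents after the index shift. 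The heart of the matter is then the $q$-Pascal identity \cite[(3,3,3), p.35]{Theory_of_Partitions} in the form
\begin{equation*}
{A+1\brack m}_q = {A\brack m}_q + q^{A+1-m}{A\brack m-1}_q,
\end{equation*}
applied with $A = N-(2k+1)n-m+j-1$, which exactly matches the power of $q$ produced by the prefactor and reassembles $F_N(i,j,k;x)$ from $F_N(i,j-1,k;x)$ and the shifted third term.

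The main obstacle I anticipate is bookkeeping the $q$-exponents so that the prefactor $xq^{N+i+j-1}$ together with the change from $F_{N-1}$ back to index $N$ produces precisely the factor $q^{A+1-m}$ demanded by $q$-Pascal, with no residual powers of $q$ or $x$ left over; in particular one must check that the extra $x$ in the prefactor is absorbed by the shift $m\mapsto m$ versus the $x^{m+(2k+1)n}$ weighting and that the second binomial index $N-2kn-m$ is genuinely unaffected by replacing $N$ with $N-1$ only in the $j$-dependent slot. Since $j$ does not enter the second binomial at all, no $q^{2k+1}$-Pascal identity is needed here, which makes this recurrence strictly simpler than Theorem~\ref{thm:F_Nijkx_rec}; the verification should therefore collapse to a single application of $q$-Pascal once the exponents are aligned, and I would finish by remarking that the cases $N<0$ hold trivially since all terms vanish.
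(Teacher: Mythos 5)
Your proposal is correct and is essentially the paper's own proof run in reverse: the paper computes $F_N(i,j,k;x)-F_N(i,j-1,k;x)$, applies the $q$-Pascal identity \cite[(3,3,3), p.35]{Theory_of_Partitions}, and then shifts $m\mapsto m+1$ to recognize the result as $xq^{N+i+j-1}F_{N-1}(i,j-1,k;x)$, whereas you shift $m\mapsto m-1$ in the third sum and apply the same $q$-Pascal identity with $A=N-(2k+1)n-m+j-1$ to reassemble $F_N(i,j,k;x)$ from the other two sums. The only blemish is a typo in your intermediate description: after the reindexing, the first binomial of the third sum is ${N-(2k+1)n-m+j-1\brack m-1}_q$ (lower index $m-1$, not $m$), which is exactly the ${A\brack m-1}_q$ term your $q$-Pascal application correctly requires.
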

\begin{proof}
\begin{align*}
F_{N}(i,j,k;x)&-F_N(i,j-1,k;x)\\
&= \sum_{m,n\geq 0} q^{ {(2k+1)n+1\choose 2} +m^2 +(2k+1)mn + i(m+n) } x^{m+(2k+1)n} {N-2kn-m\brack n}_{q^{2k+1}}\\
&\hspace{1cm}\times\left\lbrace {N-(2k+1)n-m+j\brack m}_q - {N-(2k+1)n-m+j-1\brack m}_q\right\rbrace
\intertext{by \cite[(3,3,3), p.35]{Theory_of_Partitions}}
&= \sum_{m,n\geq 0} q^{ {(2k+1)n+1\choose 2} +m^2 +(2k+1)mn + i(m+n) } x^{m+(2k+1)n} {N-2kn-m\brack n}_{q^{2k+1}}\\
&\hspace{1cm}\times\left\lbrace q^{N-(2k+1)n-2m+j} { N-(2k+1)n-m+j-1 \brack m-1}_q\right\rbrace 
\intertext{Now, by shifting the summation variable $m\mapsto m+1$ and rewriting the terms, we get}
&= \sum_{m,n\geq 0} q^{ {(2k+1)n+1\choose 2} +(m+1)^2 +(2k+1)(m+1)n + i(m+1+n) } x^{m+1+(2k+1)n} {N-2kn-m-1\brack n}_{q^{2k+1}}\\
&\hspace{1cm}\times\left\lbrace q^{N-(2k+1)n-2(m+1)+j} { N-(2k+1)n-m+j-2 \brack m-1}_q\right\rbrace \\
&= x q^{N+i+j-1} F_{N-1}(i,j-1,k;x).
\end{align*}
\end{proof}

\begin{theorem}\label{thm:F_N_ijkx_matrix_rec} 
\begin{equation}\label{eq:F_N_ijkx_matrix_rec}
F_N(i,0,k;x) - F_{N-1}(i,0,k;xq) - xq F_{N-2}(i,0,k;xq^2) + x^{2k+1}q^{{2k+2\choose 2}+i} F_{N-(2k+1)}(i,0,k;xq^{2k+1})=0.
\end{equation}
\end{theorem}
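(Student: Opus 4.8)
The plan is to prove \eqref{eq:F_N_ijkx_matrix_rec} directly from the defining double sum, in the same spirit as Theorems~\ref{thm:F_Nijkx_rec} and \ref{thm:F_Nijkx_three_term_j_rec}: I will compute $F_N(i,0,k;x)-F_{N-1}(i,0,k;xq)$ term by term and then recognise that it reassembles into the two remaining terms. Throughout write $K=2k+1$, $a=N-Kn-m$, $b=N-(K-1)n-m$, and abbreviate the common prefactor of the $(m,n)$-summand as $(-1)^nq^{E(m,n)}x^{m+Kn}$ with $E(m,n)={Kn+1\choose 2}+m^2+Kmn+i(m+n)$. The observation that drives everything is that the substitution $x\mapsto xq$ multiplies the $(m,n)$-summand by exactly $q^{m+Kn}$; hence the $(m,n)$-term of $F_N(i,0,k;x)-F_{N-1}(i,0,k;xq)$ is $(-1)^nq^{E(m,n)}x^{m+Kn}\big({a\brack m}_q{b\brack n}_{q^{K}}-q^{m+Kn}{a-1\brack m}_q{b-1\brack n}_{q^{K}}\big)$. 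Since $F_N=0$ for $N<0$ and the ${A\brack B}$ vanishing conventions absorb the boundary cases, this term-by-term computation is valid for every $N\ge 0$, so it suffices to identify the bracketed expression.

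Next I would expand the bracket with the two $q$-Pascal recurrences ${a\brack m}_q=q^m{a-1\brack m}_q+{a-1\brack m-1}_q$ and ${b\brack n}_{q^K}=q^{Kn}{b-1\brack n}_{q^K}+{b-1\brack n-1}_{q^K}$ (the relation \cite[(3,3,3), p.35]{Theory_of_Partitions} used above). Multiplying these and cancelling the leading product $q^{m+Kn}{a-1\brack m}_q{b-1\brack n}_{q^K}$ leaves exactly three cross terms, which I sort according to whether the $q^K$-binomial is ${b-1\brack n}_{q^K}$ or ${b-1\brack n-1}_{q^K}$: one term $q^{Kn}{a-1\brack m-1}_q{b-1\brack n}_{q^K}$ of the first kind, and two terms $q^m{a-1\brack m}_q{b-1\brack n-1}_{q^K}$ and ${a-1\brack m-1}_q{b-1\brack n-1}_{q^K}$ of the second kind.

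The first-kind term, after the shift $m\mapsto m+1$, is a scalar multiple of the summand of $F_{N-2}(i,0,k;xq^2)$ (both its binomials then carry the argument $N-2$), and tracking the $q$-exponent contributed by the shift fixes the precise power of $q$ in the middle term of \eqref{eq:F_N_ijkx_matrix_rec}. The two second-kind terms I would first recombine, using the same $q$-Pascal relation backwards, $q^m{a-1\brack m}_q+{a-1\brack m-1}_q={a\brack m}_q$, into the single expression ${a\brack m}_q{b-1\brack n-1}_{q^K}$. Under the shift $n\mapsto n+1$ this becomes $-x^{K}q^{{K+1\choose 2}+i}$ times the summand of $F_{N-K}(i,0,k;xq^K)$: the factor $x^K$ and the scaling $x\mapsto xq^K$ appear because lowering $n$ by one moves the tops of both $q^K$-binomials by $K$, turning $F_N$ into $F_{N-K}$. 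Adding the three reindexed contributions then yields \eqref{eq:F_N_ijkx_matrix_rec}.

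The main obstacle is the second reindexing. One must combine the two second-kind terms \emph{before} shifting $n$ — neither is individually a summand of any single $F_M$, since their two binomials would force inconsistent values of $M$ — and then verify the exponent identity $E(m,n)-E(m,n-1)=K\big(m+K(n-1)\big)+{K+1\choose 2}+i$. Matching the part of this identity that is linear in the summation index is exactly what forces the scaling $xq^{K}$, matching the part linear in $m$ is automatic, and the constant term is precisely ${2k+2\choose 2}+i$, reproducing the last coefficient in \eqref{eq:F_N_ijkx_matrix_rec}. The parallel but shorter exponent bookkeeping for the first-kind term fixes the coefficient of $F_{N-2}(i,0,k;xq^2)$, and assembling these checks completes the proof.
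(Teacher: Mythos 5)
Your overall strategy is valid and genuinely different from the paper's proof. The paper identifies $F_N(i,0,k;x)$ with a continuant-type determinant (chosen so that expansion along the last column reproduces the recurrence of Theorem~\ref{thm:F_Nijkx_rec} with $j=0$ together with the right initial values), and then reads off \eqref{eq:F_N_ijkx_matrix_rec} by expanding the same determinant along its top row. You instead manipulate the double sum directly, in the same spirit as the proofs of Theorems~\ref{thm:F_Nijkx_rec} and~\ref{thm:F_Nijkx_three_term_j_rec}: your decomposition into one ``first-kind'' and two ``second-kind'' cross terms is correct, your observation that the two second-kind terms must be recombined by the $q$-Pascal rule \emph{before} the shift $n\mapsto n+1$ (since neither alone is a summand of any single $F_M$) is exactly right, and your exponent identity $E(m,n)-E(m,n-1)=K\bigl(m+K(n-1)\bigr)+\binom{K+1}{2}+i$ with $K=2k+1$ is correct, so the last term $x^{2k+1}q^{\binom{2k+2}{2}+i}F_{N-(2k+1)}(i,0,k;xq^{2k+1})$ comes out with the right coefficient and sign. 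Your route has the advantage of being self-contained, whereas the determinant argument requires separately checking that the determinant's entries and initial values really match $F_N(i,0,k;x)$. (One small point: the rule you use, ${a\brack m}_q=q^m{a-1\brack m}_q+{a-1\brack m-1}_q$, is (3.3.4) of \cite{Theory_of_Partitions}, not (3.3.3), which is the form used elsewhere in the paper.)

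There is, however, one concrete problem: your claim that the first-kind bookkeeping ``fixes the precise power of $q$ in the middle term of \eqref{eq:F_N_ijkx_matrix_rec}'' is not what the computation gives. Under $m\mapsto m+1$ one finds $E(m+1,n)+Kn=E(m,n)+2(m+Kn)+1+i$, the extra $q^{i}$ coming from the $i(m+n)$ part of the exponent --- the very same contribution you correctly retained in the constant $\binom{2k+2}{2}+i$ of the last term. Hence your method proves
\begin{equation*}
F_N(i,0,k;x)-F_{N-1}(i,0,k;xq)-xq^{\,i+1}F_{N-2}(i,0,k;xq^2)+x^{2k+1}q^{\binom{2k+2}{2}+i}F_{N-(2k+1)}(i,0,k;xq^{2k+1})=0,
\end{equation*}
with middle coefficient $xq^{i+1}$, not $xq$. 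This is in fact the correct statement: the printed \eqref{eq:F_N_ijkx_matrix_rec} fails for $i>0$ (take $i=k=1$, $N=2$: it yields $(1+q^2x)-1-xq=q^2x-qx\neq 0$), and $xq^{i+1}$ is precisely the coefficient the paper itself uses when it passes to the limit in \eqref{eq:f_i0j_rec_for_overpartitions}. So your argument is essentially right and even exposes a typo in the theorem, but as written it asserts agreement with a coefficient that your own computation contradicts; you should carry the exponent bookkeeping through explicitly and record that the middle term is $xq^{i+1}F_{N-2}(i,0,k;xq^2)$, coinciding with the printed statement only at $i=0$.
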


\begin{proof}
This result follows easily from the determinant representation of $F_N(i,0,k;x)$:
\begin{align*}F_N(i,0,k;x)&= \\ &\hspace{-1.5cm}\left|\begin{array}{ccccccccccc}
1 & xq & x^2q^3 & 0 & 0 & \dots & 0 & -x^{2k+1} q^{{2k+2\choose 2}+i} & 0 &\dots & 0 \\
-1 & 1 & x q^2 & x^2 q^5 & 0 & \dots & & & & & \vdots\\
0 & -1 & 1 & xq^3 &\ddots & & & & & & \\
0 & 0 & -1 &\ddots & \ddots & & & & & & \\
\vdots & \vdots & & \ddots & & & & & & &\vdots \\
   & & & & & & & & & \ddots& 0\\
  & & & & & & & & & & -x^{2k+1} q^{(2k+1)(N-k)+i}\\
   & & & & & & & &  & \ddots& 0\\
 & & & & & & & &  & \ddots& \vdots\\
  & & & & & & &\ddots & \ddots & \ddots& 0\\
 & & & & & & & \dots & 1 & x q^{N-2} & x^2 q^{2N-3} \\
\vdots & & & & & & & \dots& -1 & 1 & x q^{N-1}\\
0 & \dots & & & & & &\dots & 0 & -1 & 1\\
\end{array}\right|
\end{align*}
Expansion along the last column reveals that this expression satisfy Theorem~\ref{eq:F_Nijkx_rec}, and the expansion along the top row proves the assertion of this theorem.
\end{proof}

We remark that the recursion of Theorem~\ref{thm:F_N_ijkx_matrix_rec} is restricted to $j=0$ is because for $j>0$, the initial values of the determinant do not match the corresponding values of $F_N(i,j,k;x)$.

\begin{corollary}\label{cor:F_N_i0_rec}
\begin{equation}\label{eq:F_N_i0_rec} F_N(0,1,1;x) = (1+ xq)F_{N-1}(0,1,1;xq) + x^2 q^3 F_{N-2}(0,1,1,xq^2).\end{equation}
\end{corollary}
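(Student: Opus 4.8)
The plan is to reduce the $j=1$ statement to the $j=0$ case, where Theorem~\ref{thm:F_N_ijkx_matrix_rec} already supplies a scaling recurrence. Writing $G_N(x):=F_N(0,0,1;x)$, Theorem~\ref{thm:F_Nijkx_three_term_j_rec} with $(i,j,k)=(0,1,1)$ gives the single-step relation
\[
F_N(0,1,1;x)=G_N(x)+xq^{N}G_{N-1}(x).
\]
Substituting this into each of the three terms on the two sides of \eqref{eq:F_N_i0_rec} and sorting the result into the ``bottom layer'' $G_\bullet(\cdot)$ and the ``top layer'' $xq^{N}G_\bullet(\cdot)$, the identity to be proved collapses to the single relation $P(N,x)+xq^{N}P(N-1,x)=0$, where $P(N,x):=G_N(x)-(1+xq)G_{N-1}(xq)+x^2q^3G_{N-2}(xq^2)$ is the same three-term combination evaluated one index lower. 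In other words, the entire defect is governed by $P$ at two consecutive indices; this is exactly the mechanism behind Corollary~\ref{cor:F_N011x_rec}, transported from the fixed-$x$ recurrence to the scaling recurrence. (The coefficient of the last term must be taken with the sign that makes this $P$ telescope below.)

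Next I would feed the matrix recurrence into $P$. Theorem~\ref{thm:F_N_ijkx_matrix_rec} with $i=0,\,k=1$ reads
\[
G_N(x)=G_{N-1}(xq)+xq\,G_{N-2}(xq^2)-x^3q^6\,G_{N-3}(xq^3).
\]
Inserting this into the definition of $P(N,x)$, the two copies of $G_{N-1}(xq)$ combine to $-xq\,G_{N-1}(xq)$ and the remaining terms acquire a common factor $-xq$, leaving the bracket $G_{N-1}(xq)-(1+xq^2)G_{N-2}(xq^2)+x^2q^5G_{N-3}(xq^3)$, which is precisely $P(N-1,xq)$. Thus one obtains the first-order $q$-difference relation $P(N,x)=-xq\,P(N-1,xq)$. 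The key point is that the four-term recurrence for $G$ does \emph{not} produce a four-term recurrence for $P$: the coefficients $(1+xq)$ and $x^2q^3$ are tuned so that everything collapses into a single first-order relation with the argument scaled by $q$.

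Finally, since $G_0=1$ and $G_{-1}=G_{-2}=0$ give $P(0,x)=1$, iterating the first-order relation yields the closed form $P(N,x)=(-1)^Nx^Nq^{\binom{N+1}{2}}$. Because $\binom{N}{2}+N=\binom{N+1}{2}$, the two summands $P(N,x)$ and $xq^{N}P(N-1,x)$ are the same monomial $x^Nq^{\binom{N+1}{2}}$ with opposite signs, so their sum vanishes and the recurrence follows for $N\ge 1$. I expect the main obstacle to be the bookkeeping in the two reductions: correctly tracking the argument shifts $x\mapsto xq\mapsto xq^2\mapsto xq^3$ against the index shifts when recognizing the factored bracket as $P(N-1,xq)$, and separately checking the small-$N$ boundary cases, where the $q$-binomials truncate, so that the telescoping and the initial value $P(0,x)=1$ are legitimate.
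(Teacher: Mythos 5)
Your route is genuinely different from the paper's: the paper writes $F_N(0,1,1;x)$ as a tridiagonal determinant and reads Corollary~\ref{cor:F_N011x_rec} off the last-column expansion (to identify the determinant with $F_N$) and the scaling recurrence off the top-row expansion, whereas you derive the scaling recurrence purely from the already-proven Theorems~\ref{thm:F_Nijkx_three_term_j_rec} and~\ref{thm:F_N_ijkx_matrix_rec}. Your mechanics check out: with $G_N(x)=F_N(0,0,1;x)$ one has $F_N(0,1,1;x)=G_N(x)+xq^NG_{N-1}(x)$, the combination $P(N,x)=G_N(x)-(1+xq)G_{N-1}(xq)+x^2q^3G_{N-2}(xq^2)$ does satisfy $P(N,x)=-xq\,P(N-1,xq)$ via Theorem~\ref{thm:F_N_ijkx_matrix_rec}, hence $P(N,x)=(-1)^Nx^Nq^{\binom{N+1}{2}}$ and $P(N,x)+xq^NP(N-1,x)=0$. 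This is arguably more self-contained than the paper's proof, and the closed form for $P$ is a nice by-product.

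However, the sign of the last term in $P$ is not a free parameter to be ``taken so that it telescopes,'' and this is where your write-up has a genuine wrinkle. The layer decomposition of the defect of the \emph{printed} identity \eqref{eq:F_N_i0_rec} produces the combination $G_N(x)-(1+xq)G_{N-1}(xq)-x^2q^3G_{N-2}(xq^2)$, with a minus sign, and that combination does \emph{not} telescope. The combination that does telescope --- your $P$, with $+x^2q^3$ --- is the layer decomposition of the defect of
\[
F_N(0,1,1;x)=(1+xq)F_{N-1}(0,1,1;xq)-x^2q^3F_{N-2}(0,1,1;xq^2),
\]
so that minus-sign identity is what your argument actually proves. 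The twist is that this is the correct statement: using the paper's own initial values, $F_2(0,1,1;x)=1+(q+q^2)x$, while the right side of the printed \eqref{eq:F_N_i0_rec} at $N=2$ equals $(1+xq)(1+xq^2)+x^2q^3=1+(q+q^2)x+2x^2q^3$; moreover the paper's own top-row cofactor expansion gives $(1+xq)F_{N-1}(0,1,1;xq)-x^2q^3F_{N-2}(0,1,1;xq^2)$, since the $(1,2)$ entry of the determinant is $-x^2q^3$ and its minor equals $-F_{N-2}(0,1,1;xq^2)$. In short, \eqref{eq:F_N_i0_rec} as printed carries a sign typo; your method, with the signs synchronized (i.e.\ stated for the minus-sign identity throughout, rather than hedged in a parenthetical), is a correct and genuinely different proof of the corrected corollary, and it exposes the typo rather than falling victim to it.
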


\begin{proof} We deduce from Corollary~\ref{cor:F_N011x_rec} that 
\[ F_{N}(0,1,1;x):= \left|\begin{array}{cccccccccc}
1+xq & -x^2q^3  &0 & \dots &  &    0 \\ 
-1 & 1+ x q^2 & -x^2q^5 & 0 & \dots & &    &     & \vdots \\ 
0 & -1 & 1+xq^3  & \ddots & &  &  &    &    \\ 
\vdots & \vdots & 0 & \ddots &    &    &  &  &    \\ 
 &  & \vdots & \ddots &    &   &  &  &   \\ 
 &  &  &  &  & &   &  & \vdots \\
  &  &  &  &  &  &       \ddots & \ddots & 0 \\ 
\vdots  &  &      &  & \ddots & 0 & -1 & 1+ xq^{N-1} & -x^2q^{2N-1} \\[-1ex]\\ 
0 & \dots &      &  & & \dots  & 0 & -1 & 1+xq^N
\end{array} \right|\]

Expanding along the last column reveals that this expression is indeed $F(0,1,1;x)$ by Corollary~\ref{cor:F_N011x_rec}. Expansion along the top row yields \eqref{eq:F_N_i0_rec}.
\end{proof}

\section{Reduction for $k=0$}\label{sec:F_k0_reduction}

\begin{theorem}\label{thm:F_N_k0_expression}
\begin{equation}
\label{eq:F_N_k0_expression}
F_N(i,j,0;x) = \sum_{n=0}^{j-1} {j-1\brack n }_q x^n q^{n(N+i+j)}.
\end{equation}
\end{theorem}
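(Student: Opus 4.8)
The plan is to induct on $j$, the engine being the three-term relation of Theorem~\ref{thm:F_Nijkx_three_term_j_rec}, which for $k=0$ specializes to $F_N(i,j,0;x)=F_N(i,j-1,0;x)+xq^{N+i+j-1}F_{N-1}(i,j-1,0;x)$. Thus no direct resummation of the defining double series is needed: I would feed the inductively known closed form at index $j-1$ into this relation and then collapse the two resulting single sums into the closed form at index $j$ by a single application of the $q$-Pascal rule. This is exactly the strategy that makes Theorem~\ref{thm:F_Nijkx_three_term_j_rec} worth isolating.

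For the inductive step I would write $F_N(i,j-1,0;x)=\sum_{n}{j-2\brack n}_q x^n q^{n(\dots)}$ together with the companion series $F_{N-1}(i,j-1,0;x)$ (the same formula with $N$ lowered by one), and reindex the second by $m=n+1$ so that both become series in a common $x^m$. The coefficient of $x^m$ then receives ${j-2\brack m}_q$ from the first series and ${j-2\brack m-1}_q$ from the second, the latter carrying an explicit power of $q$ inherited from the prefactor $xq^{N+i+j-1}$. These two Gaussian coefficients must fuse, via $q$-Pascal in the form ${j-1\brack m}_q={j-2\brack m}_q+q^{\,j-1-m}{j-2\brack m-1}_q$, into the $m$-th term of the target. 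It is precisely here that the $q$-exponent of the closed form is pinned down, since the collapse succeeds only if the power of $q$ carried by the prefactor matches the $q^{\,j-1-m}$ demanded by $q$-Pascal; I would treat verifying this matching as the central computation.

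It remains to anchor the induction. The right-hand side is an empty (hence zero) sum only at $j=0$, where it cannot agree with $F_N(i,0,0;x)$, so the induction must instead start at $j=1$, where the assertion is the clean identity $F_N(i,1,0;x)=1$. I would prove this directly from the definition by grouping terms according to the total degree $s=m+n$: after substituting $m+n=s$ into the two Gaussian coefficients and the quadratic exponent, the coefficient of each $x^s$ with $s\ge 1$ becomes a single terminating alternating $q$-binomial sum in $n$, whose vanishing is a finite Gaussian-coefficient identity (for small $s$ one checks directly that the surviving terms cancel pairwise).

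The step I expect to be the main obstacle is not the $q$-Pascal algebra but the boundary behaviour in $N$. Because the defining double sum for $F_N(i,j,0;x)$ forces $m+n\le N$, its $x$-degree is at most $N$, so for $N<j-1$ the top terms of the proposed closed form genuinely drop out and the identity in its stated ``full'' form is visible only once $N$ is large enough for the whole sum to be present. I would therefore track the exact range of validity so that the boundary contribution $F_{N-1}(i,j-1,0;x)$ entering the recurrence at small $N$ is accounted for correctly and does not corrupt the collapse. Pinning the $q$-exponent in the fusion step and controlling this $N$-range are the crux; the remainder is routine bookkeeping.
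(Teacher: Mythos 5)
Your route is genuinely different from the paper's. The paper proves this with no induction and no use of Theorem~\ref{thm:F_Nijkx_three_term_j_rec}: it sums along the diagonal $M=m+n$, which factors the double sum as $\sum_{M\geq 0}x^M(-1)^Mq^{{M+1\choose 2}+iM}\sum_{m}(-1)^mq^{{m\choose 2}}{N-M+j\brack m}_q{N-m\brack M-m}_q$, and then evaluates the inner sum in one stroke by $q$-Chu--Vandermonde \cite[p.37, (3.3.10)]{Theory_of_Partitions}. Your induction on $j$ replaces that evaluation by a $q$-Pascal fusion and concentrates all the genuine summation content into the base case $j=1$; both strategies are sound. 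The most valuable part of your plan is the point you flag as the ``central computation,'' because carrying it out shows the theorem \emph{as printed} has a typo. Writing the inductive hypothesis as $F_N(i,j-1,0;x)=\sum_n{j-2\brack n}_qx^nq^{nE_N}$, the recurrence gives the coefficient of $x^m$ as ${j-2\brack m}_qq^{mE_N}+{j-2\brack m-1}_qq^{(m-1)E_{N-1}+N+i+j-1}$, and the rule ${j-1\brack m}_q={j-2\brack m}_q+q^{j-1-m}{j-2\brack m-1}_q$ collapses this to ${j-1\brack m}_qq^{mE_N}$ precisely when $E_N=N+i+1$ (hence $E_{N-1}=N+i$); with the printed exponent $E_N=N+i+j$ the powers cannot match. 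So the closed form must read $q^{n(N+i+1)}$, not $q^{n(N+i+j)}$; a direct check confirms $F_1(0,2,0;x)=1+q^2x$, whereas the printed statement gives $1+q^3x$. The paper is internally inconsistent on this point: the final line of its own proof carries the correct $q^{M(N+i+1)}$ (along with a stray factor $(-1)^M$ that should have cancelled). Your method therefore cannot prove the statement as printed --- it is false --- but it does prove the corrected one.

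Two items in your sketch still need to be nailed down. First, the base case: observing that terms ``cancel pairwise for small $s$'' is not a proof. A complete argument is short: putting $k=m$ and $A=N-s$, the coefficient of $x^s$ in $F_N(i,1,0;x)$ is
\begin{equation*}
q^{s^2+is}\sum_{n+k=s}(-1)^nq^{{n+1\choose 2}-sn}{A+1\brack k}_q{A+n\brack n}_q
=(-1)^sq^{s^2+is-{s\choose 2}}\sum_{n+k=s}(-1)^kq^{{k\choose 2}}{A+1\brack k}_q{A+n\brack n}_q,
\end{equation*}
since ${n+1\choose 2}-sn={k\choose 2}-{s\choose 2}$ whenever $n+k=s$; the right-hand sum is the coefficient of $z^s$ in $(z;q)_{A+1}\cdot(z;q)_{A+1}^{-1}=1$, by the two standard finite $q$-binomial expansions \cite[Ch.~3]{Theory_of_Partitions}, hence vanishes for all $s\geq1$. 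Second, the $N$-range: your worry is justified and exposes a hypothesis the paper omits. The identity (even with the corrected exponent) requires $N\geq j-1$: for instance $F_1(0,3,0;x)=1+(q^2+q^3)x$ lacks the term $q^4x^2$ of the closed form. Your induction lives on exactly this domain: the step at $(N,j)$ invokes $(N,j-1)$ and $(N-1,j-1)$, so the region $N\geq j-1$ is closed under the recursion and is anchored at $j=1$, where $F_N(i,1,0;x)=1$ for all $N\geq0$.
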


\begin{proof}
\begin{align*}
F_N(i,j,0;x) &= \sum_{m,n\geq 0}x^{m+n} (-1)^n q^{{n+1\choose 2}+m^2 +mn+i(m+n)} {N-n-m+j\brack m}_q {N-m\brack n}_q\intertext{Letting $M=n+m$ we rewrite the right side expression as follows.}
&= \sum_{M\geq0}x^M (-1)^M q^{{M+1\choose 2} + iM} \sum_{m\geq 0 } (-1)^m q^{m\choose 2} {N-M+j \brack m}_q {N-m \brack M-m}_q\\
\intertext{We can use the $q$-Chu--Vandermonde summation formula \cite[p.37, (3.3.10)]{Theory_of_Partitions} to simplify the inner sum and finish the proof.}
&=\sum_{M\geq0}x^M (-1)^M q^{{M+1\choose 2} + iM} {N\brack M}_q \frac{(-1)^M q^{NM-{M\choose 2}}(q;q)_{j-1}}{(q;q)_{j-M-1}}\\&=
\sum_{M\geq 0}x^M (-1)^M q^{M(N+i+1)} {j-1\brack M}_q.
\end{align*}
\end{proof}

\section{Proof of Theorem~\ref{thm:main_theorem}}\label{sec:Main_thm}

\begin{theorem}\label{thm:f_N+1} For non-negative integers $N,$ let \begin{equation}
\label{eq:fN_def}f_N(q) := \sum_{j\geq 0} q^{3j^2 -2j} {N \brack 3j}_q (q^2,q^3)_j,
\end{equation} then \begin{equation}\label{eq:lemma_f_N+1}f_{N+1}(q) = F_N(0,1,1;1) + q^N F_{N-1}(0,1,1;1).\end{equation}
\end{theorem}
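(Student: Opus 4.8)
The plan is to prove \eqref{eq:lemma_f_N+1} not by evaluating either side in closed form, but by the device already used in the proof of Corollary~\ref{cor:F_N011x_rec}: I would set
\[ D_N := f_{N+1}(q) - F_N(0,1,1;1) - q^N F_{N-1}(0,1,1;1), \]
exhibit a homogeneous linear $q$-recurrence satisfied by $D_N$, check that $D_N$ vanishes on a sufficiently long block of initial indices, and conclude $D_N\equiv 0$. The whole argument then reduces to producing matching recurrences for the two pieces of $D_N$ and counting the initial conditions needed.

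For the $F$-side, write $R_N := F_N(0,1,1;1) + q^N F_{N-1}(0,1,1;1)$. Setting $x=1$ in Corollary~\ref{cor:F_N011x_rec} gives $F_N(0,1,1;1) = (1+q^N)F_{N-1}(0,1,1;1) - q^{2N-1}F_{N-2}(0,1,1;1)$, and the three-term form of Theorem~\ref{thm:F_Nijkx_rec} gives an alternative recurrence relating $F_N(0,1,1;1)$, $F_{N-1}(0,1,1;1)$, $F_{N-2}(0,1,1;1)$ and $F_{N-3}(0,1,1;1)$. Eliminating the individual $F$-values between these would let me express $R_N$ through its own earlier values, i.e. a linear $q$-recurrence with explicit polynomial coefficients for the right-hand side of \eqref{eq:lemma_f_N+1}.

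For the $f$-side I would read off a recurrence for $f_N$ directly from \eqref{eq:fN_def}. The tools are the two $q$-Pascal rules ${N\brack 3j}_q = {N-1\brack 3j}_q + q^{N-3j}{N-1\brack 3j-1}_q$ and ${N\brack 3j}_q = q^{3j}{N-1\brack 3j}_q + {N-1\brack 3j-1}_q$, together with the one-step recurrence of the factor $(q^2,q^3)_j$ in $j$. This is where I expect the real work to lie: because the lower index $3j$ is confined to a single residue class, a single Pascal step spawns sums indexed by $3j-1$ that are not themselves of the form $f_\bullet$. I would therefore iterate the rule across a full period of three, so that the lower index returns to a multiple of three, and telescope the intermediate, off-residue terms against the weight $q^{3j^2-2j}$ and the factor $(q^2,q^3)_j$---in the same spirit as the pairwise cancellation of the bracketed terms in the proof of Theorem~\ref{thm:F_Nijkx_rec}. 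The output should be a linear $q$-recurrence for $f_N$ of controlled (at most third) order.

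With both recurrences in hand, $D_N$ satisfies the homogeneous recurrence obtained from their common refinement, and it is annihilated once it vanishes on an initial block whose length equals that recurrence's order. I would finish by evaluating $f_1, f_2, f_3,\dots$ from \eqref{eq:fN_def} and comparing them with $F_N(0,1,1;1)+q^N F_{N-1}(0,1,1;1)$ at the matching indices, using the values $F_{-1},F_0,F_1,F_2,F_3$ of $F_N(0,1,1;1)$ tabulated in the proof of Corollary~\ref{cor:F_N011x_rec}. Once these base cases check out, $D_N\equiv 0$ and \eqref{eq:lemma_f_N+1} follows.
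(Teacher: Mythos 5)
Your overall strategy is exactly the paper's: show that both sides of \eqref{eq:lemma_f_N+1} satisfy a common linear $q$-recurrence and then match initial values. The paper, however, carries this out with computer algebra: the $q$-Zeilberger algorithm certifies the second-order recurrence \eqref{eq:rec_fN} for $f_N$, closure properties of $q$-holonomic sequences (via the \texttt{qGeneratingFunctions} package) produce a recurrence for $F_N(0,1,1;1)+q^N F_{N-1}(0,1,1;1)$ out of Corollary~\ref{cor:F_N011x_rec}, and the \texttt{qFunctions} package finds a greatest common factor of the two operators, showing both sides are annihilated by \eqref{eq:rec_fN}. Your treatment of the $F$-side is a sound by-hand version of the closure-property step: since every $R_{N-k}$ is a rational-coefficient combination of two fixed consecutive values $F_{N-1}(0,1,1;1)$, $F_{N-2}(0,1,1;1)$ modulo the recurrence of Corollary~\ref{cor:F_N011x_rec}, three consecutive $R$'s must be linearly dependent, which yields the desired recurrence.

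The genuine gap is on the $f$-side. The step you describe as iterating the $q$-Pascal rule across a period of three and then telescoping the off-residue terms ``in the same spirit as'' the proof of Theorem~\ref{thm:F_Nijkx_rec} is precisely where the paper needs creative telescoping. After three Pascal steps you are left with auxiliary sums involving ${N-k \brack 3j-1}_q$ and ${N-k \brack 3j-2}_q$ against the weight $q^{3j^2-2j}(q^2;q^3)_j$, and these do not cancel pairwise the way the bracketed terms do in Theorem~\ref{thm:F_Nijkx_rec}; they must be introduced as new unknown sequences and eliminated (Sister Celine style), and nothing in your sketch guarantees that this elimination closes at low order with the non-trivial coefficients that actually appear in \eqref{eq:rec_fN}, namely $(1-q^{N-2})$, $(1-q^{2N-3})$ and $q^{2N-4}(1-q^{N-1})$. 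As written, your plan asserts the key recurrence rather than establishing it. A second, smaller caution concerns your closing step: because these recurrences have polynomial coefficients, the leading coefficient can vanish at small indices (for \eqref{eq:rec_fN} it vanishes at $N=2$), so checking a block of initial values whose length equals the order of the recurrence is not quite sufficient; you must verify every index at which the leading coefficient degenerates before induction takes over.
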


\begin{proof} The $q$-Zeilberger algorithm (implemented in \cite{HolonomicFunctions} and many other places) is enough to prove that $f_N(q)$ satisfies the recurrence relation \begin{equation}\label{eq:rec_fN}
(1 - q^{N-2}) f_{ N}(q)- (1 - q^{ 2 N-3}) f_{ N-1}(q) +q^{2 N-4} (1 - q^{ N-1}) f_{N-2}(q) =0.
\end{equation}
Corollary~\ref{cor:F_N011x_rec} shows that
\[F_{N}(0,1,1;1) -(1-q^{N})F_{N-1}(0,1,1;1)+q^{2N-1}F_{N-2}(0,1,1;1)=0\] and similarly we can show that \[\hat{F}_{N}(0,1,1;1) -q(1-q^{N-1})\hat{F}_{N-1}(0,1,1;1)+q^{2N-1}\hat{F}_{N-2}(0,1,1;1)=0,\] where $\hat{F}_N(0,1,1;1) = q^N F_N(0,1,1;1)$.
We can find a recurrence satisfied by the sequence defined as the difference of $F_N(0,1,1;1)$ and $q^N F^{N-1}(0,1,1;1)$ using the closure properties of holonomic functions. This formal calculations are also included in the \texttt{qGeneratingFunctions} Mathematica package of Kauers and Koutschan \cite{qGeneratingFunctions}. Using the above recurrences we can then show that $b_{N+1} = F_N(0,1,1;1) - q^N F_{N-1}(0,1,1;1)$ satisfies the recurrence \begin{align}
\label{eq:rec_f_RHS}
 b_{N+1} -(1 + q) (1 + q^{N-1}) b_{N} +q (1 + q^{2 N-4} + &q^{N-1} + q^{N-1} + q^{2N-10})b_{N-1} q^{2N-3}\\\nonumber & - q^{2N-3} (1 + q) (1+ q^{NN-2}) b_{N-2} +q^{4N-8} b_{N-3}=0.  
\end{align}
At this stage, to prove the lemma, one can either find a recurrence satisfied by the difference $f_N(q)$ and $b_N$ using the same approach or checks to see if the recurrences \eqref{eq:rec_fN} and \eqref{eq:rec_f_RHS} have a common factor. The \texttt{qFunctions} package of the second author has the implementation of finding a greatest common factor of two given recurrences. Using that we show that $f_N(q)$ and $b_N=F_{N-1}(0,1,1;1) - q^{N-1} F_{N-2}(0,1,1;1)$ satisfies the same second order recurrence \eqref{eq:rec_fN}.

Now that we showed the left- and right-hand sides of \eqref{eq:lemma_f_N+1} satisfy the same recurrence \eqref{eq:rec_fN}. All we need to do is to show that the initial values match. For $N=0$ and $1$, it is easy to see that both sides of the claim gives 1. Therefore, since both sides satisfy the same second order recurrence with two equal initial conditions, the equation \eqref{eq:lemma_f_N+1} is true for all $N\geq 2$.
\end{proof}

Now we are finally equipped to prove Theorem~\ref{thm:main_theorem}. For $|q|<1$, taking the limit $N\rightarrow\infty$ of \eqref{eq:lemma_f_N+1} we get 
\begin{equation}\label{eq:limit_of_lemma}
\sum_{j\geq 0} \frac{(q^2;q^3)_\infty}{(q;q)_{3j}} q^{3j^2-2j} = \sum_{m,n\geq 0} \frac{(-1)^n q^{\frac{3n(3n+1)}{2} + m^2+ 3mn}}{(q)_m(q^3;q^3)_n}.
\end{equation}
The right-hand side of \eqref{eq:limit_of_lemma} is the left-hand side of \eqref{eq:1mod3eqn}. Moreover, the left-hand side series in \eqref{eq:limit_of_lemma} can be summed using the $q$-Gauss sum \cite[II.8, p.354]{Gasper_Rahman}, \begin{equation*}
\sum_{j\geq 0} \frac{(a,b;q)_j}{(q,c;q)_j} \left(\frac{c}{ab}\right)^j  = \frac{\left(\frac{c}{a},\frac{c}{b};q \right)_\infty}{\left(c,\frac{c}{ab};q \right)_\infty} ,
\end{equation*} with $(a,b,c,q)\mapsto(\rho,\rho,q,q^3)$ and later $\rho\rightarrow\infty$. This finishes the proof of Theorem~\ref{thm:main_theorem}.

\section{Overpartitions and Proof of Corollary~\ref{cor:Overpartition_corollary}}\label{sec:Overpartitions}

An overpartition of $n$ is an integer partition in which the first appearance of any summand may be overlined. For example, the eight overpartitions of 3 are \[3,\  \overline{3},\ 2+1,\ 2+\overline{1},\ \overline{2}+1,\ \overline{2}+\overline{1},\ 1+1+1,\ \overline{1}+1+1.\]

In the following we shall discuss sequences in overpartitions. We shall say that there is a sequence of form $a_1 + a_2 +\dots + a_r$ in the given overpartition if for some $j\geq 0$, $(a_1+j) + (a_2+j) + \dots +(a_r+j)$ appears as a subpartition of the given partition. For example, there is a sequence of the form $\overline{1}+ 2+ \overline{3}$ in $4+\overline{5}+6+\overline{7}+7$ because $\overline{(1+4)} + (2+4)+ \overline{(3+4)}$ appears in the given overpartition.

\begin{theorem}\label{thm:Overpartition_GF}
\[\frac{F(0,0,k;x)}{(xq;q)_\infty}\] is the generating function for the overpartitions, where the exponent of $x$ keeps track of the number of parts, in which \begin{enumerate}[i.]
\item $\overline{j}+\overline{(j+1)}$ does not appear,
\item there are no sequences of the form $\overline{1}+2+\overline{3}+4+\overline{5}+\dots+(2k)+\overline{(2k+1)}$.
\end{enumerate}
If $i>0$, \[\frac{F(i,0,k;x)}{(xq;q)_\infty}\] is the generating function for the overpartitions,in which
\begin{enumerate}[i.]
\setcounter{enumi}{2}
\item $\overline{j}+\overline{(j+1)}$ does not appear,
\item the smallest overlined part is $> i $,
\item sequences of the form \[2+3+\dots+i+\overline{(i+1)}+(i+1)+(i+2)+\overline{(i+3)} +(i+4) +\overline{(i+5)}+\dots +\overline{(2k)}+(2k+1)\] if $i$ is odd, and \[2+3+\dots+i+\overline{(i+1)}+(i+1)+(i+2)+\overline{(i+3)} +(i+4) +\overline{(i+5)}+\dots +(2k)+\overline{(2k+1)}\] if $i$ is even, are excluded.
\end{enumerate}
\end{theorem}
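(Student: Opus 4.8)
The plan is to establish both generating-function identities by the method already used for the finite polynomials in Section~\ref{sec:FN_recs}: produce a $q$-difference equation for the analytic side, show the combinatorial side satisfies the identical equation, and match one initial value. First I would let $N\to\infty$ in Theorem~\ref{thm:F_N_ijkx_matrix_rec}. Each $F_N(i,0,k;y)$ converges coefficientwise to $F(i,0,k;y)=\lim_{N\to\infty}F_N(i,0,k;y)$, so the recurrence passes to the limit and gives
\[
F(i,0,k;x)=F(i,0,k;xq)+xq\,F(i,0,k;xq^2)-q^{\binom{2k+2}{2}+i}x^{2k+1}F(i,0,k;xq^{2k+1}).
\]
Writing $G_i(x):=F(i,0,k;x)/(xq;q)_\infty$ and using $(xq^{m};q)_\infty=(xq;q)_\infty/(xq;q)_{m-1}$, this rearranges into a closed $q$-difference equation for $G_i$:
\[
G_i(x)=\frac{G_i(xq)}{1-xq}+\frac{xq\,G_i(xq^2)}{(1-xq)(1-xq^2)}-\frac{q^{\binom{2k+2}{2}+i}x^{2k+1}G_i(xq^{2k+1})}{(xq;q)_{2k+1}}.
\]
Comparing coefficients of $x^r$, the first summand contributes $q^r\,[x^r]G_i$ (plus terms in $[x^s]G_i$ with $s<r$) while the other two involve only $[x^s]G_i$ with $s<r$; hence $(1-q^r)[x^r]G_i$ is determined by lower-order data for every $r\ge 1$, and the whole series is pinned down by $G_i(0)=1$.

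Next I would let $\Phi_i(x)=\sum x^{\#\text{parts}}q^{\text{size}}$ range over the overpartitions described in the theorem and prove that $\Phi_i$ satisfies the same equation by a sieve on the value $1$. A simultaneous upward shift of all parts preserves both exclusions, since they are phrased on relative, translation-invariant patterns; thus $\Phi_i(xq)$ enumerates the valid overpartitions in which $1$ is not overlined, and $1/(1-xq)$ then re-inserts arbitrarily many non-overlined $1$'s, producing the first summand. The overpartitions containing $\overline{1}$ make up the rest: for $i=0$, condition (i) forbids $\overline{2}$, the factor $1/\big((1-xq)(1-xq^2)\big)$ inserts non-overlined $1$'s and $2$'s, and $\Phi_i(xq^2)$ supplies a valid configuration on the parts $\ge 3$, yielding the middle summand. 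This overcounts precisely those overpartitions in which $\overline{1}$, a non-overlined $2$, and a block beginning $\overline{3}+4+\overline{5}+\dots+\overline{(2k+1)}$ co-occur, i.e.\ those containing the forbidden sequence of (ii); the last summand excises them, its numerator encoding the removed block of $2k+1$ parts of total size $1+2+\dots+(2k+1)=\binom{2k+2}{2}$ and the denominator $(xq;q)_{2k+1}$ the freely repeatable non-overlined copies of $1,\dots,2k+1$.

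For $i>0$ the same scheme applies, the extra input being condition (iv): forcing the smallest overlined part to exceed $i$ shifts and lengthens the excluded pattern of (v), and is exactly what supplies the additional factor $q^{i}$ in the final coefficient, with conditions (iii) and (iv) playing the roles that (i) and the minimality of the overline played when $i=0$. Once $\Phi_i$ is shown to obey the displayed $q$-difference equation and to have constant term $\Phi_i(0)=1=G_i(0)$, the triangularity noted above forces $\Phi_i=G_i=F(i,0,k;x)/(xq;q)_\infty$, which is the assertion.

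The main obstacle is the combinatorial step, and specifically the negative term. One must verify that the sieve removes each bad overpartition exactly once---that a single subtraction against the recursively defined $\Phi_i(xq^{2k+1})$ correctly handles all multiplicities of the freely repeated non-overlined parts and all ways the forbidden sequence can be anchored---and that the ``no two consecutive overlined parts'' rule (i)/(iii) interacts with the long alternating sequence of (ii)/(v) without creating further over- or under-counting. The analytic functional equation is routine given Theorem~\ref{thm:F_N_ijkx_matrix_rec}; it is this clean sieve realizing it, together with the correct treatment of the overlap between the two exclusion rules, where the real work lies.
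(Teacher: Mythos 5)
Your strategy coincides with the paper's: let $N\to\infty$ in Theorem~\ref{thm:F_N_ijkx_matrix_rec}, divide by $(xq;q)_\infty$, note that the resulting $q$-difference equation together with the initial condition determines the series uniquely, and then verify that the overpartition generating function satisfies the same equation by sieving on the smallest admissible overlined part. For $i=0$ your argument is essentially the paper's proof. For $i>0$, however, there is a genuine error: the $q$-difference equation you display is false. The correct middle coefficient is $xq^{i+1}$, not $xq$; this is what the paper uses in its own functional equation \eqref{eq:f_i0j_rec_for_overpartitions}. (You inherited the slip from the printed statement of Theorem~\ref{thm:F_N_ijkx_matrix_rec}, which drops a factor of $q^{i}$, but that statement cannot be right as printed: for $k\geq 1$ the coefficient of $x^{1}$ in $F(i,0,k;x)$ is $q^{i+1}/(1-q)$, so comparing coefficients of $x^{1}$ in
\[
F(i,0,k;x)=F(i,0,k;xq)+xq\,F(i,0,k;xq^{2})-x^{2k+1}q^{\binom{2k+2}{2}+i}F(i,0,k;xq^{2k+1})
\]
forces $\frac{q^{i+1}}{1-q}=\frac{q^{i+2}}{1-q}+q$, i.e.\ $q^{i+1}=q$, which fails for $i>0$.)

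This mistake is fatal to the second half of your argument precisely because the combinatorics pins down that coefficient: by condition (iv) the smallest overlined part that can occur is $\overline{(i+1)}$, so the middle term of the sieve, which accounts for the overpartitions containing $\overline{(i+1)}$, must carry the weight $xq^{i+1}$ of that inserted part, exactly as the subtracted term carries the weight $x^{2k+1}q^{\binom{2k+2}{2}+i}$ of the forbidden block in (v). This is exactly the case you deferred with ``the same scheme applies'': for $i>0$ the sieve runs on the part $\overline{(i+1)}$, not on the value $1$, and your translation-invariance justification of the first term also needs adjusting there, since condition (iv) is not translation invariant (one must check that deleting the non-overlined $1$'s and shifting all parts down by one carries valid overpartitions without $\overline{(i+1)}$ bijectively onto valid overpartitions). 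Had you carried out the $i>0$ sieve, the mismatch with your displayed equation would have surfaced immediately. The repair is straightforward---replace the middle coefficient by $xq^{i+1}$ and run the sieve on $\overline{(i+1)}$---after which your uniqueness-by-triangularity argument and your $i=0$ analysis go through and reproduce the paper's proof.
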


For example, with $k=i=1$, the coefficient of $q^7$ in $F(1,0,1;x)/(xq;q)_\infty$ is \[x^7 + 2x^6 + 4x^5 + 7x^4 + 10x^3 + 9x^2 + 2x.\] The ten indicated partitions of 7 with 3 parts are \[5+1+1,\ \overline{5}+1+1,\ 4+2+1,\ \overline{4}+2+1,\ 4+\overline{2}+1,\ \overline{4}+\overline{2}+1,\ 3+3+1,\ \overline{3}+3+1,\ 3+2+2,\ \overline{3}+3+2.\] Note that $\overline{3}+\overline{2}+2$, $5+\overline{1}+1$, $3+\overline{2}+2$ have been excluded by the conditions $iii.$, $iv.$ and $v.$, respectively.

\begin{proof} We take the limit as $N\rightarrow\infty$ in Theorem~\ref{thm:F_N_ijkx_matrix_rec}. Thus if \[f(i,j;x) := \frac{F(i,0,k;x)}{(xq;q)_\infty},\] then \begin{equation}
\label{eq:f_i0j_rec_for_overpartitions} 
f(i,k;x,q):=f(i,k;x) = \frac{1}{(1-xq)}f(i,k;xq) + \frac{xq^{i+1}}{(1-xq)(1-xq^2)}f(i,k;xq^2) - \frac{x^{2k+1}q^{{2k+2\choose 2}+i}}{(xq;q)_{2k+1}}f(i,k;xq^{2k+1}).
\end{equation}
Quite clearly $f(i,k;x)$ is uniquely defined by \eqref{eq:f_i0j_rec_for_overpartitions} give the boundary conditions $f(i,k;x,0) = f(i,k;0,q)=1$. Hence to prove our theorem, we need only to show that the generating functions for the overpartitions in question  satisfies the functional equation \eqref{eq:f_i0j_rec_for_overpartitions} plus the initial conditions. The boundary conditions are immediate from the fact that the empty partition of 0 is the only partition of non-positive number and the only partition with a nun-positive number of parts.

Let us now examine the three components of the right-hand side of \eqref{eq:f_i0j_rec_for_overpartitions}. The first term \[\frac{f(i,k;xq)}{(1-xq)}\] clearly accounts for those overpartitions in question that do not have $\overline{i+1}$ as a part.
The second term \[\frac{xq^{i+1}}{(1-xq)(1-xq^2)}f(i,k;xq^2)\] accounts for those overpartitions where now $\overline{(i+1)}$ appear.

This term has introduced disallowed partitions. Particularly, it has juxtaposed $\overline{(i+1)}$ with the subpartition $2+3+\dots+i+\overline{(i+1)}+(i+1)+(i+2)+\overline{(i+3)} +(i+4) +\overline{(i+5)}+\dots +\overline{(2k)+(2k+1)}$ (an overline appears either on $(2k)$ or on $(2k+1)$ as appropriate). This is not admissible.  

However, the term \[-\frac{x^{2k+1}q^{2+3+\dots+i+\overline{(i+1)}+(i+1)+(i+2)+\overline{(i+3)} +(i+4) +\overline{(i+5)}+\dots +\overline{(2k)+(2k+1)}}}{(xq;q)_{2k+1}}f(i,k;xq^{2k+1})\] removes the offending overpartitions. Thus the right-hand side of \eqref{eq:f_i0j_rec_for_overpartitions} accounts for precisely those overpartitions described by Theorem~\ref{thm:Overpartition_GF}.
\end{proof}

We now prove Corollary~\ref{cor:Overpartition_corollary}.
\begin{proof}[Proof of Corollary~\ref{cor:Overpartition_corollary}]
We recall Theorem~\ref{thm:main_theorem}, which asserts \[F(0,1;1) = \frac{1}{(q;q^3)_\infty}.\] Hence, \begin{equation}
\label{eq:F_011_divided_by_qPoch} \frac{F(0,1;1)}{(q;q)_\infty} = \frac{1}{(q;q)_\infty(q;q^3)_\infty}.
\end{equation}
The left-hand side of \eqref{eq:F_011_divided_by_qPoch} is the generating function for the overpartitions described in Corollary~\ref{cor:Overpartition_corollary} in the light of Theorem~\ref{thm:Overpartition_GF}. The right-hand side of the generating function for colored partitions describe in Corollary~\ref{cor:Overpartition_corollary}.
\end{proof}

\section{Some related continued fraction identities}\label{sec:ContFrac}

One can easily see that the three term relations of Corollaries \ref{cor:F_N011x_rec} and \ref{cor:F_N_i0_rec} give to finite continued fractions. We would like to start by recalling the initial conditions for $F_N(0,1,1;x)$ presented in the proof of Corollary~\ref{cor:F_N011x_rec}:
\[ F_0(0,1,1;x)=1\text{  and  }F_1(0,1,1;x)=1+xq.\]
By a simple rearrangement of terms in the said Corollaries \ref{cor:F_N011x_rec} and \ref{cor:F_N_i0_rec} and iteration of the formulas 
gives the two following results, respectively.

\begin{corollary} For $N\geq 1$, 
\begin{equation}\label{eq:ContFrac1}
\frac{F_N(0,1,1;x)}{F_{N-1}(0,1,1;x)} = 1+x q^N - \cfrac{x^2 q^{N-1}}{1+x q^{N-1} - \cfrac{x^2 q^{N-2}}{\ddots - \cfrac{\ddots}{1+x q^2 - \cfrac{x^2q}{1+xq}}}}.
\end{equation}
\end{corollary}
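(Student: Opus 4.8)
The plan is to prove \eqref{eq:ContFrac1} by induction on $N$, exploiting the self-similar (tail-recursive) shape of the displayed continued fraction. Denote by $C_N(x)$ the right-hand side of \eqref{eq:ContFrac1}. The first step is to observe that everything lying below the topmost partial numerator of $C_N(x)$ is again a continued fraction of exactly the same form, now headed by $1+xq^{N-1}$ and terminating at $1+xq$; in other words $C_N(x)=1+xq^N-\frac{x^2q^{N-1}}{C_{N-1}(x)}$, with terminal value $C_1(x)=1+xq$. This one-line recursion for the right-hand side is the engine of the whole argument, since it reduces a statement about an $N$-fold nested fraction to a single relation between two consecutive levels.

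Next I would carry out the induction on the claim $F_N(0,1,1;x)/F_{N-1}(0,1,1;x)=C_N(x)$ for all $N\ge 1$. The base case $N=1$ is immediate from the initial values recalled just above the corollary, $F_0(0,1,1;x)=1$ and $F_1(0,1,1;x)=1+xq$, which give $F_1/F_0=1+xq=C_1(x)$. For the inductive step I would divide the three-term recurrence of Corollary~\ref{cor:F_N011x_rec} through by $F_{N-1}(0,1,1;x)$ and rearrange it into the ratio form
\[
\frac{F_N(0,1,1;x)}{F_{N-1}(0,1,1;x)}=1+xq^N-\frac{a_N}{\,F_{N-1}(0,1,1;x)\big/F_{N-2}(0,1,1;x)\,},
\]
where $a_N$ denotes the partial numerator contributed at this level. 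Invoking the inductive hypothesis $F_{N-1}/F_{N-2}=C_{N-1}(x)$ and comparing the result with the tail recursion $C_N(x)=1+xq^N-x^2q^{N-1}/C_{N-1}(x)$ from the first step then closes the induction, provided the two partial numerators agree.

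The main obstacle — essentially the only nontrivial point — is the bookkeeping of that partial numerator: one must confirm that the quantity $a_N$ produced when Corollary~\ref{cor:F_N011x_rec} is divided by $F_{N-1}(0,1,1;x)$ is precisely the $x^2q^{N-1}$ displayed at the corresponding level of \eqref{eq:ContFrac1}. Because this is a delicate exponent match, I would verify it directly against the explicit low-order data $F_0,F_1,F_2,F_3$ recorded in the proof of Corollary~\ref{cor:F_N011x_rec} before trusting the general telescoping; the case $N=2$, where the nest collapses to the single level $1+xq^2-x^2q/(1+xq)$, is the cleanest consistency check and should be examined first. One might alternatively attempt to reach the displayed form from the ratio recursion by an equivalence transformation of the continued fraction, but any such transformation rescales the partial denominators $1+xq^{N-k}$ as well, so the direct tail-recursion induction is the more transparent route. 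Once the partial quotients are seen to line up at every level, the induction runs automatically and the iteration of Corollary~\ref{cor:F_N011x_rec} yields the finite continued fraction \eqref{eq:ContFrac1}.
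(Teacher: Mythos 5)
Your route is the paper's route: the authors obtain \eqref{eq:ContFrac1} by ``a simple rearrangement of terms'' in Corollary~\ref{cor:F_N011x_rec} and iteration, which is exactly your tail-recursion induction with base $F_1/F_0=1+xq$. However, the one step you defer --- identifying the partial numerator $a_N$ --- is precisely the step that fails for the statement as printed. Dividing \eqref{eq:F_N011x_rec} by $F_{N-1}(0,1,1;x)$ gives
\begin{equation*}
\frac{F_N(0,1,1;x)}{F_{N-1}(0,1,1;x)}=1+xq^N-\frac{x^2q^{2N-1}}{F_{N-1}(0,1,1;x)/F_{N-2}(0,1,1;x)},
\end{equation*}
so $a_N=x^2q^{2N-1}$, not the $x^2q^{N-1}$ displayed in \eqref{eq:ContFrac1}. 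The $N=2$ consistency check that you yourself recommend, but do not carry out, already exposes this: from the initial values, $F_2/F_1=(1+xq+xq^2)/(1+xq)$, which equals $1+xq^2-x^2q^3/(1+xq)$, whereas the printed fraction $1+xq^2-x^2q/(1+xq)$ differs from it by $x^2q(q^2-1)/(1+xq)\neq 0$. So your induction cannot close on the identity as displayed; asserting that ``once the partial quotients are seen to line up \dots the induction runs automatically'' skips the only nontrivial verification, and that verification fails.

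The resolution is that \eqref{eq:ContFrac1} as printed carries a typographical error, and what your argument actually proves, once $a_N$ is computed from the recurrence, is
\begin{equation*}
\frac{F_N(0,1,1;x)}{F_{N-1}(0,1,1;x)} = 1+x q^N - \cfrac{x^2 q^{2N-1}}{1+x q^{N-1} - \cfrac{x^2 q^{2N-3}}{\ddots - \cfrac{\ddots}{1+x q^2 - \cfrac{x^2q^{3}}{1+xq}}}},
\end{equation*}
with partial numerator $x^2q^{2k+1}$ sitting over the level headed by $1+xq^k$. This is also consistent with the companion result \eqref{eq:ContFrac2}, whose numerators $x^2q^3, x^2q^5,\dots,x^2q^{2N-1}$ arise from the parallel recurrence of Corollary~\ref{cor:F_N_i0_rec}. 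With $a_N$ corrected, your induction is complete and coincides with the paper's one-line iteration argument; as submitted, the proposal establishes the corrected identity in outline but does not (and cannot) establish the printed one.
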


\begin{corollary}\label{cor:ContFrac2} For $N\geq 1$, 
\begin{equation}\label{eq:ContFrac2}
\frac{F_N(0,1,1;x)}{F_{N-1}(0,1,1;x q)} = 1+x q + \cfrac{x^2 q^{3}}{1+x q^{2} - \cfrac{x^2 q^{5}}{\ddots - \cfrac{\ddots}{1+x q^{N-1} + \cfrac{x^2q^{2N-1}}{1+xq^N}}}}.
\end{equation}
\end{corollary}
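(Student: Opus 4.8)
The plan is to obtain the continued fraction directly from the base-shifted recurrence of Corollary~\ref{cor:F_N_i0_rec}, by forming the ratio of consecutive $F$'s and iterating. Write $R_N(x) := F_N(0,1,1;x)/F_{N-1}(0,1,1;xq)$, so that the assertion is exactly the claim that $R_N(x)$ equals the displayed finite continued fraction. The structural point that makes this work is that Corollary~\ref{cor:F_N_i0_rec} relates $F_N(\,\cdot\,;x)$ to $F_{N-1}(\,\cdot\,;xq)$ and $F_{N-2}(\,\cdot\,;xq^2)$ rather than to $F_{N-1}(\,\cdot\,;x)$ and $F_{N-2}(\,\cdot\,;x)$; the shift of the base variable by one power of $q$ at each descent is precisely what generates the increasing $q$-powers down the fraction, and it is what makes this expansion genuinely different from the one produced by the fixed-argument recurrence of Corollary~\ref{cor:F_N011x_rec}.

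First I would divide Corollary~\ref{cor:F_N_i0_rec} through by $F_{N-1}(0,1,1;xq)$. The resulting trailing ratio $F_{N-2}(0,1,1;xq^2)/F_{N-1}(0,1,1;xq)$ is, after replacing $x$ by $xq$ and $N$ by $N-1$ in the definition of $R$, exactly $1/R_{N-1}(xq)$. This yields the single self-similar relation
\[
R_N(x) = (1+xq) - \frac{x^2 q^3}{R_{N-1}(xq)},
\]
where the sign is the one forced by the $-x^2q^3$ superdiagonal in the determinant of Theorem~\ref{thm:F_N_ijkx_matrix_rec} and Corollary~\ref{cor:F_N_i0_rec}. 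Iterating this relation $N-1$ times, with the argument running through $x,\,xq,\,xq^2,\dots$, unfolds into the claimed continued fraction; formally I would run an induction on $N$, the base case being $R_1(x) = F_1(0,1,1;x)/F_0(0,1,1;xq) = (1+xq)/1 = 1+xq$ from the initial values listed in the proof of Corollary~\ref{cor:F_N011x_rec}, and the inductive step inserting the $(N-1)$-level fraction for $R_{N-1}(xq)$ into the self-similar relation.

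The obstacle here is bookkeeping rather than ideas: one must keep the $q$-powers straight under the repeated substitution $x\mapsto xq^{\,j}$, checking that the partial denominators come out as $1+xq,\,1+xq^2,\dots,1+xq^N$ and the partial numerators as $x^2q^3,\,x^2q^5,\dots,x^2q^{2N-1}$, and one must pin down every sign. I would secure the signs by evaluating $R_2(x)$ against the tabulated $F_0,F_1,F_2$ before trusting the general pattern, and I would confirm the innermost layer by specializing the self-similar relation to $R_2(xq^{\,N-2}) = (1+xq^{N-1}) - x^2q^{2N-1}/R_1(xq^{\,N-1})$ with $R_1(xq^{\,N-1}) = 1+xq^N$, which should reproduce the bottom of the displayed fraction.
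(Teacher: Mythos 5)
Your proposal is the paper's own proof: the paper's entire argument for this corollary is that a ``simple rearrangement of terms'' in Corollary~\ref{cor:F_N_i0_rec} ``and iteration of the formulas'' gives \eqref{eq:ContFrac2}, which is exactly your division by $F_{N-1}(0,1,1;xq)$ followed by induction on $N$ with base case $R_1(x)=1+xq$.

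One caveat, and it is in your favor. The sign you chose, $R_N(x)=(1+xq)-x^2q^3/R_{N-1}(xq)$, is the correct one: the top-row expansion of the determinant in the proof of Corollary~\ref{cor:F_N_i0_rec} gives $F_N(0,1,1;x)=(1+xq)F_{N-1}(0,1,1;xq)-x^2q^3F_{N-2}(0,1,1;xq^2)$ (the cofactor sign $(-1)^{1+2}$, the entry $-x^2q^3$, and the leading $-1$ in the minor multiply to an overall minus), and your proposed numerical check confirms it: $(1+xq)F_1(0,1,1;xq)-x^2q^3F_0(0,1,1;xq^2)=(1+xq)(1+xq^2)-x^2q^3=1+xq+xq^2=F_2(0,1,1;x)$, whereas the plus sign as printed in \eqref{eq:F_N_i0_rec} fails this check. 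Consequently your iteration produces a continued fraction in which \emph{every} partial numerator enters with a minus sign, so your final verification step will not ``reproduce the bottom of the displayed fraction'': the ``$+$'' before $x^2q^3$ and before $x^2q^{2N-1}$ in \eqref{eq:ContFrac2}, like the ``$+x^2q^3$'' in \eqref{eq:F_N_i0_rec}, are typographical errors in the paper. The all-minus version is the correct statement; it is the one consistent with the all-minus signs of \eqref{eq:ContFrac1}, with the limiting continued fraction in Theorem~\ref{thm:ContFracM}, and with Ramanujan's \eqref{RamanujanCF}, and it checks directly at $N=2,3$ against the tabulated $F_N$. So when your sign check turns up a mismatch with the display, trust your derivation rather than the display.
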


The continued fraction \eqref{eq:ContFrac1} tends to 1 as $N\rightarrow\infty$. On the other hand, as $N\rightarrow\infty$, we get the following result from Corollary~\ref{cor:ContFrac2} at $x=1$ by employing Theorem~\ref{thm:main_theorem} and simple manmipulations.

\begin{theorem}\label{thm:ContFracM}
\[(q;q^3)_\infty \sum_{m,n\geq 0} \frac{(-1)^n q^{\frac{3n(3n+1)}{2} + m^2+ 3mn +m +3n+1}}{(q)_m(q^3;q^3)_n} = \cfrac{q}{1+ q - \cfrac{ q^{3}}{1+ q^{2} - \cfrac{ q^{5}}{1+q^3 - \cfrac{q^7}{\ddots}}}}.\]
\end{theorem}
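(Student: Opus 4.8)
The plan is to recognize the left-hand side as a rescaled ratio of two specializations of the series $F(0,1;x)$ of \eqref{eq:F_ikx_def}, and then to read off the continued fraction by letting $N\to\infty$ in Corollary~\ref{cor:ContFrac2}.

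First I would rewrite the series on the left. Comparing its exponent $\frac{3n(3n+1)}{2}+m^2+3mn+m+3n+1$ with the definition \eqref{eq:F_ikx_def} of $F(0,1;x)$, the factor $q^{m+3n}$ is exactly $x^{m+3n}$ at $x=q$, while the remaining $q^{1}$ is a global factor. Hence the sum equals $q\,F(0,1;q)$, and the whole left-hand side is $q\,(q;q^3)_\infty\,F(0,1;q)$. Using Theorem~\ref{thm:main_theorem} in the form $(q;q^3)_\infty\,F(0,1;1)=1$, I would replace $(q;q^3)_\infty$ by $1/F(0,1;1)$, so that the left-hand side becomes
\[
q\,\frac{F(0,1;q)}{F(0,1;1)}=\frac{q}{F(0,1;1)/F(0,1;q)}.
\]

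The decisive step is to evaluate the ratio $F(0,1;1)/F(0,1;q)$ as a continued fraction. For this I would put $x=1$ in \eqref{eq:ContFrac2} and let $N\to\infty$. Since $\lim_{N\to\infty}F_N(0,1,1;1)=F(0,1;1)$ and $\lim_{N\to\infty}F_{N-1}(0,1,1;q)=F(0,1;q)$ --- in each case because ${A\brack B}_q\to 1/(q;q)_B$ as $A\to\infty$ and the resulting series converge absolutely for $|q|<1$ --- the left-hand side of \eqref{eq:ContFrac2} tends to $F(0,1;1)/F(0,1;q)$, while its right-hand side stabilizes to the infinite continued fraction displayed in the denominator of the claim. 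Substituting this limit into the expression above produces precisely the right-hand side of the theorem.

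The main obstacle lies in the two analytic justifications hidden in that last step. One must confirm that the truncated polynomial $F_{N-1}(0,1,1;q)$ genuinely converges to the full series $F(0,1;q)$ and that the limit is nonzero, so the ratio is well defined; the constant term of $F(0,1;q)$ is $1$, which secures this for $|q|$ small (equivalently in the $q$-adic/formal power series sense). One must also justify that the finite continued fractions of \eqref{eq:ContFrac2} converge to the infinite one: for $|q|<1$ the partial numerators $q^{2j+1}$ decay geometrically, so the tail contributes negligibly and a standard convergence criterion for continued fractions applies; formally, each coefficient of a fixed power $q^{k}$ is unaffected once the continued fraction is carried beyond finitely many levels.
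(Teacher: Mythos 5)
Your proposal is correct and follows essentially the same route as the paper: the paper also proves Theorem~\ref{thm:ContFracM} by setting $x=1$ in Corollary~\ref{cor:ContFrac2}, letting $N\to\infty$, and invoking Theorem~\ref{thm:main_theorem} to trade $(q;q^3)_\infty$ for $1/F(0,1;1)$, with your write-up merely making explicit the limits (of $F_N(0,1,1;1)$, of $F_{N-1}(0,1,1;q)$, and of the finite continued fractions) that the paper dismisses as ``simple manipulations.'' The only point worth flagging is that the signs printed in Corollary~\ref{cor:F_N_i0_rec} and in \eqref{eq:ContFrac2} contain misprints --- the determinant expansion actually gives $F_N(0,1,1;x)=(1+xq)F_{N-1}(0,1,1;xq)-x^2q^3F_{N-2}(0,1,1;xq^2)$, as a check at $N=2$ confirms --- so the limiting continued fraction has the all-minus shape stated in the theorem, which your argument implicitly (and correctly) assumes.
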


The right-hand side of Theorem~\ref{thm:ContFracM} is closely related to a continued fraction noted by Ramanujan \cite{GEA_CF1, GEA_CF2}: \begin{equation}\label{RamanujanCF}\frac{(q^2;q^3)_\infty}{(q;q^3)_\infty}=\cfrac{1}{1-\cfrac{q}{1+ q - \cfrac{ q^{3}}{1+ q^{2} - \cfrac{ q^{5}}{1+q^3 - \cfrac{q^7}{\ddots}}}}}.\end{equation} This yields the following theorem subject to some simple manipulations.

\begin{theorem}\label{thm:CF_thm}
\[\sum_{m,n\geq 0} \frac{(-1)^n q^{\frac{3n(3n+1)}{2} + m^2+ 3mn +m +3n+1}}{(q;q)_m(q^3;q^3)_n} = \frac{1}{(q;q^3)_\infty}-\frac{1}{(q^2;q^3)_\infty}.\] 
\end{theorem}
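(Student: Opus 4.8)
Looking at Theorem~\ref{thm:CF_thm}, I need to understand how it follows from the continued fraction machinery developed in the preceding results.

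The plan is to combine Theorem~\ref{thm:ContFracM} with Ramanujan's continued fraction identity \eqref{RamanujanCF}. Let me denote by $C$ the continued fraction on the right-hand side of Theorem~\ref{thm:ContFracM}, namely
\[
C = \cfrac{q}{1+ q - \cfrac{ q^{3}}{1+ q^{2} - \cfrac{ q^{5}}{1+q^3 - \cfrac{q^7}{\ddots}}}}.
\]
Theorem~\ref{thm:ContFracM} identifies $C$ with $(q;q^3)_\infty$ times a certain double series, while \eqref{RamanujanCF} identifies the same $C$ in terms of infinite products via
\[
\frac{(q^2;q^3)_\infty}{(q;q^3)_\infty} = \frac{1}{1 - C}.
\]
The first step is therefore to solve \eqref{RamanujanCF} for $C$, obtaining $1 - C = (q;q^3)_\infty/(q^2;q^3)_\infty$, and hence
\[
C = 1 - \frac{(q;q^3)_\infty}{(q^2;q^3)_\infty}.
\]

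The second step is to substitute this closed form for $C$ into Theorem~\ref{thm:ContFracM}. This gives
\[
(q;q^3)_\infty \sum_{m,n\geq 0} \frac{(-1)^n q^{\frac{3n(3n+1)}{2} + m^2+ 3mn +m +3n+1}}{(q)_m(q^3;q^3)_n} = 1 - \frac{(q;q^3)_\infty}{(q^2;q^3)_\infty}.
\]
The final step is purely algebraic: I divide both sides by $(q;q^3)_\infty$. On the right-hand side this yields $1/(q;q^3)_\infty - 1/(q^2;q^3)_\infty$, which is exactly the claimed expression, and on the left the prefactor cancels, leaving the double series. This completes the derivation.

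Since every ingredient is already in hand, there is no genuine analytic obstacle here; the content is bookkeeping. The one point requiring care is the convergence and validity of the manipulations as $N \to \infty$, i.e.\ that the finite continued fraction of Corollary~\ref{cor:ContFrac2} genuinely converges to $C$ and that term-by-term passage to the limit in the series is legitimate for $|q| < 1$ — but this has effectively been absorbed into the statement of Theorem~\ref{thm:ContFracM} itself. The other subtlety worth double-checking is matching the exact shape of the continued fraction in Theorem~\ref{thm:ContFracM} against the nested fraction appearing inside \eqref{RamanujanCF}; one must confirm they are the identical object (same numerators $q, q^3, q^5, \dots$ and same denominators $1+q, 1+q^2, \dots$) so that the substitution is valid rather than merely formal. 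Once that identification is confirmed, the theorem follows immediately.
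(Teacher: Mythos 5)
Your proposal is correct and follows exactly the route the paper intends: the paper derives Theorem~\ref{thm:CF_thm} from Theorem~\ref{thm:ContFracM} and Ramanujan's identity \eqref{RamanujanCF} via precisely the ``simple manipulations'' you spell out---solving \eqref{RamanujanCF} for the common continued fraction, substituting into Theorem~\ref{thm:ContFracM}, and dividing by $(q;q^3)_\infty$. Your explicit check that the nested fraction inside \eqref{RamanujanCF} is the same object as the one in Theorem~\ref{thm:ContFracM} is a worthwhile piece of care that the paper leaves implicit.
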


Moreover, using \eqref{eq:1mod3eqn} in Theorem~\ref{thm:CF_thm}, we get a series expansion for $1/(q^2;q^3)_\infty$ similar to \eqref{eq:1mod3eqn}.

\begin{theorem}
\[\sum_{m,n\geq 0} \frac{(-1)^n q^{\frac{3n(3n+1)}{2} + m^2+ 3mn }(1-q^{m +3n+1})}{(q;q)_m(q^3;q^3)_n} = \frac{1}{(q^2;q^3)_\infty}.\]
\end{theorem}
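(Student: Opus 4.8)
The plan is to obtain the final identity as an immediate algebraic rearrangement of Theorem~\ref{thm:CF_thm}, so the real work is bookkeeping rather than any new analytic input. First I would take the identity of Theorem~\ref{thm:CF_thm},
\[
\sum_{m,n\geq 0} \frac{(-1)^n q^{\frac{3n(3n+1)}{2} + m^2+ 3mn +m +3n+1}}{(q;q)_m(q^3;q^3)_n} = \frac{1}{(q;q^3)_\infty}-\frac{1}{(q^2;q^3)_\infty},
\]
and solve for $1/(q^2;q^3)_\infty$, giving
\[
\frac{1}{(q^2;q^3)_\infty} = \frac{1}{(q;q^3)_\infty} - \sum_{m,n\geq 0} \frac{(-1)^n q^{\frac{3n(3n+1)}{2} + m^2+ 3mn +m +3n+1}}{(q;q)_m(q^3;q^3)_n}.
\]

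Next I would use Theorem~\ref{thm:main_theorem}, namely $1/(q;q^3)_\infty = F(0,1;1)$, to replace the leading term $1/(q;q^3)_\infty$ by the double series in \eqref{eq:1mod3eqn}. Both series that now appear on the right-hand side run over the same summand $(-1)^n q^{\frac{3n(3n+1)}{2} + m^2 + 3mn}/\big((q;q)_m(q^3;q^3)_n\big)$, the only difference being the extra factor $q^{m+3n+1}$ in the second series. I would therefore combine the two sums term by term, factoring out the common summand, to obtain
\[
\frac{1}{(q^2;q^3)_\infty} = \sum_{m,n\geq 0} \frac{(-1)^n q^{\frac{3n(3n+1)}{2} + m^2+ 3mn}}{(q;q)_m(q^3;q^3)_n}\bigl(1 - q^{m+3n+1}\bigr),
\]
which is precisely the claimed identity.

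The only point requiring a word of care is the legitimacy of combining the two infinite series termwise and distributing the factor $(1-q^{m+3n+1})$; this is justified because for $|q|<1$ each double series converges absolutely, so the difference may be taken inside the summation. No new summation formula or recurrence is needed beyond Theorems~\ref{thm:main_theorem} and~\ref{thm:CF_thm} already established. I do not anticipate any genuine obstacle here: the statement is a direct corollary, and the entire proof amounts to a single substitution followed by collecting like terms.
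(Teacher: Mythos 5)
Your proposal is correct and is essentially identical to the paper's own argument: the paper derives this theorem in one line by ``using \eqref{eq:1mod3eqn} in Theorem~\ref{thm:CF_thm},'' i.e.\ substituting the series representation of $1/(q;q^3)_\infty$ from Theorem~\ref{thm:main_theorem} into Theorem~\ref{thm:CF_thm} and combining the two double series termwise, exactly as you do. Your added remark on absolute convergence justifying the termwise combination is a fine (if routine) point of rigor that the paper leaves implicit.
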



\section{Acknowledgment}

The first author would like to thank the Simons foundation for their support through the grant number 633284. The second author would like to thank UK Research and Innovation EPSRC and Austrian Science fund (FWF) for partially supporting his research through the grants EP/T015713/1 and P-34501N, respectively.

\end{document}